\documentclass[11pt,a4paper]{amsart}
\pagestyle{plain}

\usepackage{fancyhdr}
\usepackage{a4wide}
\usepackage{graphicx}
\usepackage{float}
\usepackage{amssymb}
\usepackage{amsmath}
\usepackage{amsthm}
\usepackage{color}
\usepackage{mathrsfs}
\usepackage[T1]{fontenc}
\usepackage{inputenc}
\usepackage[english]{babel}
\usepackage{lmodern}
\usepackage{hyperref}
\usepackage{geometry}
\usepackage{changepage}
\usepackage{xcolor,colortbl}
\geometry{hmargin=2.1cm, vmargin=2.1cm }
\changepage{0pt}{}{}{}{}{0pt}{}{0pt}{10pt}
\usepackage[numbers]{natbib}
\setlength{\bibsep}{0.0pt}
\usepackage{hyperref}
\hypersetup{
pdfpagemode=UseThumbs,
pdftoolbar=true,        
pdfmenubar=true,        
pdffitwindow=false,     
pdfstartview={Fit},    
pdftitle={Construction of invisible conductivities for the point electrode model in electrical impedance tomography},    
pdfauthor={L. Chesnel, N. Hyv\"{o}nen, S. Staboulis},     
pdfcreator={L. Chesnel, N. Hyv\"{o}nen, S. Staboulis},   
pdfproducer={L. Chesnel, N. Hyv\"{o}nen, S. Staboulis}, 
pdfkeywords={}, 
pdfnewwindow=true,      
colorlinks=true,       
linkcolor=magenta,          
citecolor=red,        
filecolor=cyan,      
urlcolor=blue           
}

\newcommand{\dsp}{\displaystyle}
\newcommand{\eps}{\varepsilon}
\newcommand{\om}{\omega}
\newcommand{\Om}{\Omega}
\newcommand{\mrm}[1]{\mathrm{#1}}

\newcommand{\R}{\mathbb{R}}
\renewcommand{\div}{\mrm{div}}
\newcommand{\mL}{\mrm{L}}
\newcommand{\mH}{\mrm{H}}

\renewcommand{\ker}{\mrm{ker}}

\definecolor{Gray}{gray}{0.90}

\newcommand{\overbar}[1]{\mkern 1.0mu\overline{\mkern-1.0mu#1\mkern-1.0mu}\mkern 1.0mu}

\newtheorem{theorem}{Theorem}[section]
\newtheorem{lemma}{Lemma}[section]
\newtheorem{remark}{Remark}[section]

\newtheorem{proposition}{Proposition}[section]

\newtheorem{alg}{Algorithm}

\begin{document}
\title[Invisible conductivity perturbations in EIT]{Construction of invisible conductivity perturbations for the point  electrode model in electrical impedance tomography}

\author{Lucas Chesnel}
\address{Centre de Math\'ematiques Appliqu\'ees, bureau 2029, \'Ecole Polytechnique, 91128 Palaiseau Cedex, France}
\email{lucas.chesnel@cmap.polytechnique.fr}

\author{Nuutti Hyv\"onen}
\address{Aalto University, Department of Mathematics and Systems Analysis, P.O. Box 11100, FI-00076 Aalto, Finland}
\email{nuutti.hyvonen@aalto.fi}

\author{Stratos Staboulis}
\address{University of Helsinki, Department of Mathematics and Statistics, P.O. Box 68, FI-00014, Finland}
\email{stratos.staboulis@helsinki.fi}

\keywords{Electrical impedance tomography, point electrode model, invisibility, localized current sources, elliptic boundary value problems, complete electrode model.}

\subjclass[2010]{35J25, 65N20, 35R30}

\begin{abstract}
We explain how to build invisible isotropic conductivity perturbations of the unit conductivity in the framework of the point electrode model for two-dimensional electrical impedance tomography. The theoretical approach, based on solving a fixed point problem, is constructive and allows the implementation of an algorithm for approximating the invisible perturbations. The functionality of the method is demonstrated via numerical examples.
\end{abstract}

\maketitle

\section{Introduction}
{\em Electrical impedance tomography} (EIT) is a noninvasive imaging technique with applications, e.g., in medical imaging, process tomography, and nondestructive testing of materials \cite{Borcea02,Cheney99,Uhlmann09}. The aim  of EIT is to reconstruct the conductivity  distribution inside the examined physical body $D \subset \R^d$, $d \geq 2$, from boundary measurements of current and potential. From the purely theoretical standpoint, the inverse problem of EIT, also known as the inverse conductivity problem, corresponds to determining the strictly positive conductivity $\sigma: D \to \R$ in the elliptic equation
\begin{equation}
\label{basic}
\div(\sigma \nabla u)  =  0 \quad\mbox{in }D 
\end{equation}
from knowledge of the corresponding Neumann-to-Dirichlet (or Dirichlet-to-Neumann) map at the object boundary $\partial D$. This formulation corresponds to the idealized {\em continuum model} (CM), which is mathematically attractive in its simplicity but lacks a straightforward connection to practical EIT measurements that are always performed with a finite number of contact electrodes. On the other hand, it is widely acknowledged that the most accurate model for real-life EIT is the {\em complete electrode model} (CEM), which takes into account electrode shapes and contact resistances at electrode-object interfaces \cite{Cheng89,Somersalo92}.

Instead of the CM or the CEM, in this work we employ the {\em point electrode model} (PEM) that treats the current-feeding electrodes of EIT as delta-like boundary current sources and models the potential measurements as pointwise evaluations of the corresponding solution to \eqref{basic} at the electrode locations~\cite{Hanke11}. (To make such a model well defined, relative potential measurements need to be considered due to the singularities induced by the localized current inputs; see Section~\ref{sec:setting} for the details.) The PEM is a reasonably good model for practical EIT measurements --- in particular, superior to the CM (cf.~\cite{Hyvonen09}) --- if the electrodes are small compared to the size of the imaged body: The discrepancy between the PEM and the CEM is of the order $O(h^2)$, with $h>0$ being the maximal diameter of the electrodes \cite{Hanke11}. In this work we develop a method, introduced by \cite{BoNa13} in the context of acoustic waveguides (see also \cite{Naza11} for the original idea), for building invisible perturbations of a given reference conductivity for an arbitrary but fixed electrode configuration modeled by the PEM.

To put our work into perspective, let us briefly review the main results on the unique solvability of the inverse conductivity problem, which was proposed to the mathematical community by Calder\'on in \cite{Calderon80}. For $d\geq3$, the global uniqueness for $\mathscr{C}^2$-conductivities was proven in \cite{Sylvester87}; an extension of this result to the case Lipschitz conductivities can be found in \cite{Tataru13}. The first global uniqueness proof in two spatial dimensions was given by \cite{Nachman96} for $\mathscr{C}^2$-conductivities; subsequently, \cite{Astala06} proved uniqueness for general  $L^\infty$-conductivities.  All these articles assume the Cauchy data for \eqref{basic} are known on all of $\partial D$, but the partial data problem of having access only to some subset(s) of $\partial D$ has also been tackled by many mathematicians; see, e.g., \cite{Imanuvilov10,Kenig07} and the references therein. From our view point, the most important uniqueness results are presented in \cite{Hyvonen13,Seis14} where it is shown that any two-dimensional conductivity that equals a known constant in some interior neighborhood of $\partial D$ is uniquely determined by the PEM measurements with countably infinite number of electrodes. This manuscript complements \cite{Hyvonen13,Seis14} by constructively showing that for an arbitrary, but fixed and finite, electrode configuration there exists a perturbation of the unit conductivity that is invisible for the EIT measurements in the framework of the PEM. For completeness, it should also be mentioned that recently the nonuniqueness of the inverse conductivity problem has been studied for {\em finite element method} (FEM) discretizations with piecewise linear basis functions \cite{Lassas13}. (Take note that we restrict our attention exclusively to isotropic conductivities because it is well known that the inverse conductivity problem is not uniquely solvable for anisotropic conductivities; see, e.g., \cite{Astala05,Sylvester90} and the references therein.)

Our constructive proof for the existence of `invisible conductivities' is based on introducing a small perturbation of a given reference conductivity and applying a suitable fixed point iteration. The general idea originates from \cite{CaNR12,Naza11,Naza11c,Naza11b,Naza12,Naza13}, where the authors develop a method for constructing small regular and singular perturbations of a waveguide that preserve the multiplicity of the point spectrum on a given interval of the continuous spectrum. Subsequently, the same approach has been adapted in \cite{BoNa13} (see also \cite{BoNTSu} for an application to a water wave problem) to demonstrate the existence of regular perturbations of a waveguide allowing waves at given frequencies to pass through without any distortion or with only a phase shift. Recently, in \cite{BoCNSu} the methodology has also been used in inverse obstacle scattering to construct defects in a reference medium which are invisible to a finite number of far field measurements. Although our technique is, in principle, applicable in any spatial dimension $d\geq2$ and for an arbitrary smooth enough reference conductivity, a necessary intermediate result about linear independence of certain auxiliary functions (cf.~Lemma~\ref{lemmaLinearIndep}) is proved here only for $d=2$ and the unit reference conductivity, making the main theoretical result of this work two-dimensional. We implement the constructive proof as a numerical algorithm that is capable of producing conductivities that are indistinguishable from the unit conductivity for a given set of point electrodes. We also numerically demonstrate that such conductivities are almost invisible for small electrodes within the CEM as well.

It should be emphasized that the existence of invisible conductivity perturbations for a finite number of electrodes is not very surprising: The space of admissible conductivities is infinite dimensional whereas EIT measurements with $N+1$ electrodes include only $N(N+1)/2$ degrees of freedom. Be that as it may, we are not aware of any previous works providing examples on this nonuniqueness that is of importance in practical EIT.

This text is organized as follows. Section~\ref{sec:setting} introduces the setting for our analysis and a representation of the relative PEM measurements with respect to a certain simple basis of electrode current patterns. The general scheme for constructing invisible conductivity perturbations is presented in Section~\ref{sec:scheme} and, subsequently, Section~\ref{sec:2D} adds the missing theoretical piece in two dimensions. The main result of the article is presented in Theorem \ref{MainTheorem}.
The algorithmic implementation of the constructive existence proof is considered in Section~\ref{sec:algo} and, finally, the numerical examples are presented in Section~\ref{sec:numerics}.

\section{Setting}
\label{sec:setting}
\noindent Let $D\subset\R^d$, $d\ge2$, be a simply connected and bounded domain with a $\mathscr{C}^{\infty}$-boundary. Introduce  a real valued \textit{reference} conductivity $\sigma^0\in\mathscr{C}^{\infty}(\overline{D})$ such that $\sigma^0\ge c > 0$ in $D$. 
(Throughout this text $C,c>0$ denote generic constants that may change from one occurrence to the next.) 
Consider the Neumann boundary value problem
\begin{equation}\label{BVP0}
\div(\sigma^0\nabla u^0)  =  0 \quad\mbox{in }D,\qquad \nu \cdot \sigma^0\nabla u^0=f\quad\mbox{on }\partial D
\end{equation}
for a current density $f$ in 
\begin{equation}\label{def0MeanSpace}
\mH^s_{\diamond}(\partial D) := \{g\in\mH^s(\partial D)\,|\,\langle g,1\rangle_{\partial D} =0\},
\end{equation}
with some $s\in\R$. Here and in what follows, $\nu$ denotes the unit normal vector of $\partial D$ orientated to the exterior of $D$. We observe that the dual of $\mH^s_{\diamond}(\partial D)$ is realized by 
\begin{equation}\label{defQSpace}
\mH^{-s}(\partial D)/\R := \mH^{-s}(\partial D)/\mrm{span}\{1\},\qquad s\in\R.
\end{equation}
It follows from standard theory of elliptic boundary value problems that (\ref{BVP0}) has a unique solution $u^0 \in \mH^{s+3/2}(D)/\R$, which is smooth away from $\partial D$ due to the interior regularity for elliptic equations. In particular, for any compactly embedded domain $\Omega \Subset D$ it holds that
\begin{equation}
\label{interiorR}
c \|u^0\|_{\mH^1(\Omega) / \R} \leq \|u^0\|_{\mH^{s+3/2}(D) / \R} \leq  C \| f \|_{\mH^s_{\diamond}(\partial D)},
\end{equation}
where $c = c(s,\Omega)$ and $C = C(s)$ \cite{LiMa68}. Let us next consider a \textit{perturbed} conductivity 
\begin{equation}\label{perturbedConductivity}
\sigma^{\eps}:=\sigma^{0}+\eps \kappa,
\end{equation}
where $\kappa\in\mrm{L}^{\infty}(D)$ is compactly supported in $D$ and $\eps>0$ is such that $\sigma^{\eps}\ge c>0$ in $D$. The corresponding perturbed Neumann boundary value problem
\begin{equation}\label{BVPperturbed}
\div(\sigma^\eps\nabla u^{\eps})  =  0 \quad\mbox{in }D,\qquad \nu \cdot \sigma^\eps\nabla u^{\eps}=f\quad\mbox{on }\partial D,
\end{equation}
with $f \in \mH^s_{\diamond}(\partial D)$ for some $s \in \R$, has a unique solution in $(\mH^{s+3/2}(D)\cap\mH^{1}_{\mrm{loc}}(D))/\R$\footnote{Here, $\mH^{1}_{\mrm{loc}}(D)$ stands for the space of distributions $v \in \mathscr{D}'(D)$ verifying $v|_{\om}\in\mH^{1}(\om)$ for all domains $\om \Subset D$.} (cf., e.g., \cite{LiMa68,Hanke11b}).

We note that the relative Neumann-to-Dirichlet map 
\[
\Lambda^\eps-\Lambda^0 : f\mapsto (u^\eps-u^0)|_{\partial D},\qquad \mathscr{D}'_{\diamond}(\partial D)\to \mathscr{D}(\partial D)/\R
\] 
is well-defined. Here, the mean-free distributions $\mathscr{D}'_{\diamond}(\partial D)$ and the quotient space of smooth boundary potentials $\mathscr{D}(\partial D)/\R$ are defined in accordance with (\ref{def0MeanSpace}) and (\ref{defQSpace}). This regularity result can be deduced from standard elliptic theory (cf., e.g., \cite{LiMa68,Hanke11b}) and follows from the fact that $\Lambda^\eps$ and $\Lambda^0$ are pseudodifferential operators with the same symbol on $\partial D$ because $\sigma^{\eps}-\sigma^{0}$ vanishes in some interior neighborhood of $\partial D$. One can also prove the following symmetry property for the relative Neumann-to-Dirichlet map (see, e.g., \cite[Theorem 2.1]{Seis14}).
\begin{proposition}\label{PropoSymRelDTN}
For all $\varphi$, $\varphi'\in\mathscr{D}'_{\diamond}(\partial D)$, we have
\[
\langle \varphi,(\Lambda^\eps-\Lambda^0)\varphi'\rangle_{\partial D}  = \langle \varphi',(\Lambda^\eps-\Lambda^0)\varphi\rangle_{\partial D}  = \dsp\int_{D}(\sigma^0-\sigma^\eps)\nabla u^\eps_{\varphi}\cdot \nabla u^0_{\varphi'} \,d x,
\]
where $u^\eps_{\varphi}$, $u^0_{\varphi'}$  denote respectively the solutions of (\ref{BVPperturbed}), (\ref{BVP0}) with $f$ equal to $\varphi$, $\varphi'$.
\end{proposition}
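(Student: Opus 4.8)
The plan is to reduce everything to the symmetric bilinear form $(u,v)\mapsto\int_D\sigma\nabla u\cdot\nabla v\,dx$ attached to each conductivity and to extract the claimed identities from Green's formula. First I would record the weak formulation: if $u^\eps_\varphi$ solves \eqref{BVPperturbed} with datum $\varphi$, then for every sufficiently regular test function $v$ one has $\int_D\sigma^\eps\nabla u^\eps_\varphi\cdot\nabla v\,dx=\langle\varphi,v|_{\partial D}\rangle_{\partial D}$, and analogously $\int_D\sigma^0\nabla u^0_{\varphi'}\cdot\nabla v\,dx=\langle\varphi',v|_{\partial D}\rangle_{\partial D}$ for the unperturbed problem \eqref{BVP0}. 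Because the left-hand bilinear forms are symmetric in their two arguments, choosing $v=u^\eps_{\varphi'}$ (resp.\ $v=u^0_{\varphi'}$) immediately yields $\langle\varphi,\Lambda^\eps\varphi'\rangle_{\partial D}=\langle\varphi',\Lambda^\eps\varphi\rangle_{\partial D}$ and the same identity for $\Lambda^0$; subtracting gives the first equality of the proposition, namely the symmetry of $\Lambda^\eps-\Lambda^0$.

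For the integral representation I would evaluate $\int_D(\sigma^0-\sigma^\eps)\nabla u^\eps_\varphi\cdot\nabla u^0_{\varphi'}\,dx$ by splitting it into two terms. Applying the weak formulation of the perturbed problem with test function $v=u^0_{\varphi'}$ gives $\int_D\sigma^\eps\nabla u^\eps_\varphi\cdot\nabla u^0_{\varphi'}\,dx=\langle\varphi,\Lambda^0\varphi'\rangle_{\partial D}$, while the weak formulation of the unperturbed problem with test function $v=u^\eps_\varphi$ gives $\int_D\sigma^0\nabla u^0_{\varphi'}\cdot\nabla u^\eps_\varphi\,dx=\langle\varphi',\Lambda^\eps\varphi\rangle_{\partial D}$. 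Subtracting these and invoking the symmetry $\langle\varphi',\Lambda^\eps\varphi\rangle_{\partial D}=\langle\varphi,\Lambda^\eps\varphi'\rangle_{\partial D}$ established above turns the integral into $\langle\varphi,\Lambda^\eps\varphi'\rangle_{\partial D}-\langle\varphi,\Lambda^0\varphi'\rangle_{\partial D}=\langle\varphi,(\Lambda^\eps-\Lambda^0)\varphi'\rangle_{\partial D}$, as desired.

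The delicate point --- and where I expect the real work to lie --- is justifying these Green's identities for genuinely distributional data $\varphi,\varphi'\in\mathscr{D}'_{\diamond}(\partial D)$, for which the solutions live only in $(\mH^{s+3/2}(D)\cap\mH^1_{\mrm{loc}}(D))/\R$ and need not belong to $\mH^1(D)$ globally. My approach would be to prove the identities first for smooth data in $\mathscr{D}_{\diamond}(\partial D)$, where the solutions are smooth up to the boundary and the integration by parts is classical, and then pass to the limit. The crucial observation making this safe is that $\sigma^0-\sigma^\eps=-\eps\kappa$ is supported in a fixed compact set $\Om\Subset D$ on which, by interior regularity, the maps $\varphi\mapsto\nabla u^\eps_\varphi|_\Om$ and $\varphi'\mapsto\nabla u^0_{\varphi'}|_\Om$ are continuous into $\mL^2(\Om)$ thanks to the estimate \eqref{interiorR} (and its analogue for \eqref{BVPperturbed}); hence the right-hand integral is really taken over $\Om$, is well defined, and depends continuously on $(\varphi,\varphi')$. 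Likewise $(\Lambda^\eps-\Lambda^0)\varphi'$ is smooth, so its pairing with the distribution $\varphi$ is unambiguous and continuous. Combining these continuity properties with the density of $\mathscr{D}_{\diamond}(\partial D)$ in each space $\mH^s_{\diamond}(\partial D)$ then extends the identity from smooth to distributional data and completes the proof.
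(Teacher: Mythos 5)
Your argument is correct, but note that the paper does not actually prove Proposition~\ref{PropoSymRelDTN}: it only cites \cite[Theorem~2.1]{Seis14}, so there is no internal proof to compare against. What you propose --- the two weak-formulation/Green identities for smooth data, giving both the symmetry and the representation of the relative pairing as an integral weighted by $\sigma^0-\sigma^\eps$, followed by a density passage --- is the standard argument and fills this gap in a self-contained way. Two points in your limiting step deserve to be made explicit. First, the ``analogue of \eqref{interiorR} for \eqref{BVPperturbed}'' is cleanest via the paper's own decomposition $u^\eps_\varphi=u^0_\varphi+\eps\tilde{u}^{\eps}_\varphi$ from Section~\ref{sec:scheme}: the corrector lies in $\mH^1(D)/\R$ with norm controlled by $\|u^0_\varphi\|_{\mH^1(\Om)/\R}\le C\|\varphi\|_{\mH^s(\partial D)}$ (cf.~\eqref{pertnorm}), which yields the continuity of $\varphi\mapsto\nabla u^\eps_\varphi|_{\Om}$ into $\mL^2(\Om)$ that you invoke. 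Second, on the left-hand side both arguments of the duality pairing vary along your approximating sequences, so you need $(\Lambda^\eps-\Lambda^0)\varphi'_k\to(\Lambda^\eps-\Lambda^0)\varphi'$ in $\mH^{-s}(\partial D)/\R$, not merely smoothness of each image; this follows from the same decomposition together with elliptic regularity up to $\partial D$ in an annulus $D\setminus\overline{\Om'}$, $\Om\Subset\Om'\Subset D$, where $\sigma^\eps=\sigma^0$ is smooth and the equation for $u^\eps_\varphi-u^0_\varphi$ is homogeneous with vanishing Neumann data --- precisely the smoothing property the paper asserts for $\Lambda^\eps-\Lambda^0$ just before the proposition. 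Alternatively, you can pass to the limit in two stages (first in $\varphi$ with $\varphi'$ smooth and fixed, then in $\varphi'$), which only requires the one-variable continuity you already established.
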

Now, consider an observer who can impose currents between pairs of (small) electrodes located at $x_0,\dots, x_{N} \in \partial D$ and measure the resulting voltages for both conductivities $\sigma^0$ and $\sigma^\eps$. According to the PEM \cite{Hanke11}, this corresponds to knowing all elements in the matrix of relative measurements $\mathscr{M}(\sigma^\eps)\in\R^{N\times N}$ defined via
\begin{equation}\label{defMeasure}
\mathscr{M}_{i j}(\sigma^\eps) = \langle \delta_i-\delta_{0},(\Lambda^\eps-\Lambda^0)(\delta_j-\delta_{0})\rangle_{\partial D}, \qquad i,j = 1, \dots, N,
\end{equation}
where $\delta_{n} \in \mH^{-(d-1)/2-\eta}(\partial D)$, $\eta > 0$, stands for the Dirac delta distribution supported at $x_n$. It follows from  Proposition~\ref{PropoSymRelDTN} that $\mathscr{M}(\sigma^\eps)$ is symmetric and, furthermore,
\begin{equation}\label{eqnMeasure}
\mathscr{M}_{i j}(\sigma^\eps) =  \dsp\int_{D}(\sigma^0-\sigma^\eps)\nabla u^\eps_i\cdot \nabla u^0_j \,d x,
\end{equation}
where 
$u^0_n, u^{\eps}_n\in (\mH^{-(d-4)/2-\eta}(D)\cap\mH^{1}_{\mrm{loc}}(D))/\R$, $\eta>0$, are the solutions of the Neumann problems
\begin{equation}\label{pbRef0}
\div(\sigma^0\nabla u^0)  =  0 \quad\mbox{in }D,\qquad \nu \cdot \sigma^0\nabla u^0= \delta_{n} - \delta_{0}\quad\mbox{on }\partial D
\end{equation}
and
\begin{equation}\label{pbRef}
\div(\sigma^\eps\nabla u^{\eps})  =  0 \quad\mbox{in }D,\qquad \nu \cdot \sigma^\eps\nabla u^{\eps}=\delta_{n} - \delta_{0}\quad\mbox{on }\partial D,
\end{equation}
respectively. The goal of this work is to find $\sigma^{\eps}\not\equiv\sigma^{0}$ such that $\mathscr{M}(\sigma^\eps)$ vanishes.

\begin{remark}
\label{remark}
In the framework of the PEM, the relative EIT measurements corresponding to the electrode locations $x_0,\dots, x_{N} \in \partial D$ are, arguably, more intuitively described by the measurement map
\begin{equation}
\label{Mope}
M(\sigma^\eps): I \mapsto \big[(u_I^\eps - u_I^0)(x_n)\big]_{n=0}^{N}, \quad \R^{N+1}_{\diamond} \to \R^{N+1}/\R \simeq \R^{N+1}_{\diamond},
\end{equation}
where $\R^{N+1}_{\diamond}$ denotes the mean-free subspace of $\R^{N+1}$, and $u_I^\eps$ and $u_I^\eps$ are the solutions of \eqref{BVP0} and \eqref{BVPperturbed}, respectively, for the boundary current density
$$
f = \sum_{n=0}^{N} I_n \delta_n \in \mH^{-(d-1)/2-\eta}_\diamond(\partial D), \qquad \eta > 0.
$$
In other words, $M(\sigma^\eps) I \in \R^{N+1}/\R$ contains the relative potentials at the electrodes (uniquely defined up to the ground level of potential) when the net currents $I \in \R^{N+1}_\diamond$ are driven through the boundary points $x_0,\dots, x_{N} \in \partial D$.
It is easy to see that the matrix $\mathscr{M}(\sigma^\eps) \in \R^{N \times N}$ is the representation of $M(\sigma^\eps): \R^{N+1}_{\diamond} \to  \R^{N+1}_{\diamond}$ with respect to the basis 
$$
{\rm e}^n - {\rm e}^0, \quad n=1,\dots,N,
$$
where ${\rm e}^n$ denotes the $n$th Cartesian basis vector of $\R^{N+1}$. In particular, $\mathscr{M}(\sigma^\eps)$ is the null matrix if and only if $M(\sigma^\eps)$ is the null operator, that is, if and only if $\sigma^\eps$ cannot be distinguished from $\sigma^0$ by PEM measurements.
\end{remark}

\section{General scheme}
\label{sec:scheme}
Let $\Om \Subset D \subset \R^d$ be a nonempty Lipschitz domain. We  recall the decomposition $\sigma^{\eps} = \sigma^{0}+\eps \kappa$ and require that $\kappa \in L^\infty(D)$ satisfies  $\mrm{supp}(\kappa)\subset \overline{\Om}$. Our leading idea, originating from \cite{BoCNSu,BoNa13,Naza11}, is to consider a small enough $\eps > 0$ so that we can compute an asymptotic expansion for $\mathscr{M}(\sigma^\eps)$. To this end, let us first write
\begin{equation}\label{decompoUeps}
u^\eps_n=u^0_n+\eps\tilde{u}^{\eps}_n
\end{equation}
where $u^{\eps}_n, \, u^0_n \in (\mH^{-(d-4)/2-\eta}(D)\cap\mH^{1}_{\mrm{loc}}(D))/\R$ are defined by (\ref{pbRef}) and \eqref{pbRef0}, respectively.
Plugging (\ref{decompoUeps}) in \eqref{pbRef}, we see that $\tilde{u}^{\eps}_n$ must satisfy the Neumann problem
\begin{equation}\label{pbDefOrder1}
\div(\sigma^\eps \nabla \tilde{u}^{\eps}_n)  =  -  \div(\kappa\nabla u^0_n ) \quad\mbox{in }D,\qquad \nu \cdot \sigma^0\nabla \tilde{u}^\eps = 0\quad\mbox{on }\partial D.
\end{equation}
Since $\div(\kappa\nabla u^0_n)$ defines a compactly supported source in $\mH^{-1}(D)$, by applying the Lax--Milgram lemma to the variational formulation of \eqref{pbDefOrder1}, we see that \eqref{pbDefOrder1} has a unique solution, i.e.~$\tilde{u}^{\eps}_n$, in $\mH^1(D) / \R$.\footnote{Note that $\| \nabla \cdot \|_{\mL^2(D)}$ is equivalent to the standard quotient norm of $\mH^1(D) / \R$ by the Poincar\'e inequality.} Moreover, the Lax--Milgram lemma and \eqref{interiorR} also imply that
\begin{equation}
\label{pertnorm}
\| \tilde{u}^{\eps}_n \|_{\mH^1(D) / \R} \leq C \| (\sigma^\eps)^{-1} \|_{\mL^\infty(D)} \| \kappa \|_{\mL^\infty(\Omega)} \| u^0_n \|_{\mH^1(\Omega) / \R} 
\leq C \| (\sigma^\eps)^{-1} \|_{\mL^\infty(D)} \| \kappa \|_{\mL^\infty(\Omega)}
\end{equation}
where the latter $C>0$ depends on $\Omega$. 

Inserting (\ref{decompoUeps}) in (\ref{eqnMeasure}), we deduce that  
\begin{equation}\label{termInter}
\mathscr{M}_{i j}(\sigma^\eps) = -\eps \int_{D} \kappa\,\nabla u^0_i\cdot \nabla u^0_j \,d x-\eps^2 \int_{D} \kappa\,\nabla \tilde{u}^{\eps}_i\cdot \nabla u^0_j \,d x.
\end{equation}
The representation \eqref{termInter} demonstrates that $\mathscr{M}(\sigma^\eps)$ is of the order $\eps$, which was to be expected as $\mathscr{M}(\sigma^0)$ is the null matrix and $\sigma^\eps-\sigma^0 = \eps \kappa$. More interestingly,  the first term in the asymptotic expansion of $\mathscr{M}(\sigma^\eps)$ has a simple linear dependence on $\kappa$, which makes it relatively easy to find a nontrivial $\kappa$ such that $\mathscr{M}(\sigma^\eps)$ is of the order $\eps^2$. Unfortunately, the higher order terms in $\eps$ depend less explicitly on $\kappa$. To cope with this difficulty, we will next introduce a suitable fixed point problem. 
 
We redecompose $\kappa$ in the form
\begin{equation}\label{ExpressKappa}
\kappa = \kappa_0 + \sum_{j=1}^{N}\sum_{i=1}^{j}\tau_{ij}\,\kappa_{ij}
\end{equation}
where $\tau_{ij} \in \R$ are free parameters. Moreover, $\kappa_0,\,\kappa_{ij} \in L^\infty(D)$ are fixed functions that are supported in $\overline{\Omega}$ and assumed to satisfy the conditions
\begin{equation}\label{Conditions1}
\int_{D} \kappa_0\,\nabla u^0_{i'}\cdot \nabla u^0_{j'}\,d x=0\qquad \mbox{for }1\le i'\le j' \le N
\end{equation}
and
\begin{equation}\label{Conditions2}
\int_{D} \kappa_{ij} \,\nabla u^0_{i'}\cdot \nabla u^0_{j'}\,d x= \begin{array}{|ll} 1 & \mbox{ if } (i,j)=(i',j')\mbox{ or }(j,i)=(i',j'),\\[1mm]
0 & \mbox{ else}.\\
\end{array}
\end{equation}
The construction of such $\kappa_{ij}$ will be considered in Lemma~\ref{lemmaLinearIndep} and Section~\ref{sec:2D} below, but meanwhile we just assume they exist.
We remark that it seems reasonable to assume that the $N(N+1)/2$ free parameters $\tau_{ij}$ in the perturbation $\kappa$ are enough to cancel out the symmetric matrix $\mathscr{M}(\sigma^\eps) \in \R^{N \times N}$ (cf.~Proposition \ref{PropoSymRelDTN}). Substituting (\ref{ExpressKappa}) in (\ref{termInter}) and using (\ref{Conditions1})--(\ref{Conditions2}), we obtain the expansion
\begin{equation}\label{FFPTotal}
\mathscr{M}(\sigma^\eps) = -\eps\,\tau -\eps^2\,\tilde{\mathscr{M}}^\eps(\tau)
\end{equation}
where
\begin{equation}\label{defTermContra}
\tilde{\mathscr{M}}^\eps_{ij}(\tau) = \tilde{\mathscr{M}}^\eps_{ji}(\tau) = \int_{D} \kappa\,\nabla \tilde{u}^{\eps}_i\cdot \nabla u^0_j \,d x, \qquad 1\le i\le j \le N,
\end{equation}
and $\tau \in \R^{N \times N}$ is the symmetric matrix defined by the parameters  $\tau_{ij}$. 

From (\ref{FFPTotal}) it is obvious that imposing $\mathscr{M}(\sigma^\eps)=0$ in the chosen setting is equivalent to solving the following fixed point problem: 
\begin{equation}\label{PbFixedPoint}
\begin{array}{|l}
\mbox{Find }\tau\in S^N\mbox{ such that }\tau = F^{\eps}(\tau),
\end{array}~\\[5pt]
\end{equation}
where  $S^N$ denotes the space of symmetric  $N\times N$ matrices and $F^{\eps}: S^N \to S^N$ is defined by  
\begin{equation}\label{DefMapContra}
F^{\eps}(\tau)= -\eps\,\tilde{\mathscr{M}}^\eps(\tau). 
\end{equation}
Lemma \ref{lemmaTech} below ensures that for any fixed parameter $\gamma>0$ and
a small enough $\eps > 0$, the map $F^\eps$ has the invariant set
\begin{equation}
\label{Bg}
\mathbb{B}_\gamma:=\{\tau\in S^{N}\,\big|\,|\tau| \le \gamma\}
\end{equation}
on which it is a contraction. In \eqref{Bg}, $|\cdot|$ denotes an arbitrary norm of $S^{N}$. In consequence, the Banach fixed point theorem guarantees the existence of $\eps_0 = \eps_0(\gamma) >0$ such that for all $\eps\in(0;\eps_0]$, the fixed point problem (\ref{PbFixedPoint}) has a unique solution in $\mathbb{B}_\gamma$, enabling the construction of $\sigma^\eps$ for which $\mathscr{M}(\sigma^\eps) = 0$ via \eqref{ExpressKappa} and \eqref{perturbedConductivity}. 

It is important to notice that the constructed $\kappa$ defined by (\ref{ExpressKappa}) verifies $\kappa\not\equiv0$ whenever $\kappa_0\not\equiv0$ by virtue of the orthogonality conditions (\ref{Conditions1})--(\ref{Conditions2}). In other words, if $\kappa_0 \not\equiv0$, then also $\sigma^\eps \not\equiv \sigma^0$ as required. Moreover, for given $\sigma^0$, $\kappa_0$, $\kappa_{ij}$ and $\gamma>0$, the upper bound $\eps_0 > 0$ can be tuned to ensure that the perturbed conductivity corresponding to $\kappa$ of \eqref{ExpressKappa} satisfies $\sigma^\eps \geq c > 0$ for all $\eps \in [0;\eps_0]$ and $\tau \in \mathbb{B}_\gamma$, which guarantees that all conductivities involved in the fixed point iteration are admissible.

In particular, we have proved the following result (modulo Lemma~\ref{lemmaTech}):
\begin{proposition}\label{propoConstruction}
Let $\Om\Subset D$ be a Lipschitz domain. Assume that there are functions $\kappa_0, \, \kappa_{ij} \in \mL^\infty(D)$ supported in $\overline{\Om}$ satisfying (\ref{Conditions1})-(\ref{Conditions2}). Then, there exists a conductivity $\sigma^{\eps}\in\mrm{L}^{\infty}(D)$, with $\sigma^\eps \geq c > 0$,  such that $\sigma^{\eps}-\sigma^{0}\not\equiv0$, $\mbox{supp}(\sigma^{\eps}-\sigma^{0})\subset \overline{\Om}$ and
\[
\mathscr{M}_{ij}(\sigma^\eps) = \langle \delta_i-\delta_{0},(\Lambda^\eps-\Lambda^0)(\delta_j-\delta_{0})\rangle_{\partial D}=0
\]
for all $i,j=1,\dots,N$.
\end{proposition}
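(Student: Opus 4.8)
The plan is to turn the requirement $\mathscr{M}(\sigma^\eps)=0$ into a fixed point equation that a contraction argument can solve. First I would take the functions $\kappa_0$ and $\kappa_{ij}$ granted by the hypothesis and write the perturbation in the redecomposed form \eqref{ExpressKappa}, with the symmetric matrix $\tau=(\tau_{ij})\in S^N$ of free parameters. Substituting this ansatz into the asymptotic expansion \eqref{termInter} and invoking the normalizations \eqref{Conditions1}--\eqref{Conditions2}, the leading $O(\eps)$ term collapses to exactly $-\eps\,\tau$, while all the $\tau$-dependence still buried inside the correctors $\tilde u^\eps_n$ is pushed into the $O(\eps^2)$ remainder $\tilde{\mathscr{M}}^\eps(\tau)$ of \eqref{defTermContra}. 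This produces the identity \eqref{FFPTotal} and hence the equivalence: $\mathscr{M}(\sigma^\eps)=0$ if and only if $\tau=-\eps\,\tilde{\mathscr{M}}^\eps(\tau)=:F^\eps(\tau)$, i.e.\ the fixed point problem \eqref{PbFixedPoint}.

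Next I would solve this fixed point problem via the Banach fixed point theorem on the closed ball $\mathbb{B}_\gamma\subset S^N$. The two properties needed --- that $F^\eps$ maps $\mathbb{B}_\gamma$ into itself and is a contraction there for all sufficiently small $\eps$ --- are precisely the content of Lemma~\ref{lemmaTech}, which I am permitted to assume. The underlying mechanism is that each $\tilde u^\eps_n$ solves the Neumann problem \eqref{pbDefOrder1}, whose source is linear in $\kappa$ and hence affine in $\tau$, so the energy estimate \eqref{pertnorm} bounds $\|\tilde u^\eps_n\|_{\mH^1(D)/\R}$ uniformly over $\tau\in\mathbb{B}_\gamma$. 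Feeding these bounds into \eqref{defTermContra} makes $\tilde{\mathscr{M}}^\eps$ bounded with a bounded Lipschitz constant on $\mathbb{B}_\gamma$, and multiplying by $\eps$ renders $F^\eps$ a contraction once $\eps$ is small. The theorem then yields, for every $\eps\in(0;\eps_0]$, a unique $\tau\in\mathbb{B}_\gamma$ with $F^\eps(\tau)=\tau$; reinserting this $\tau$ into \eqref{ExpressKappa} and \eqref{perturbedConductivity} furnishes a conductivity with $\mathscr{M}(\sigma^\eps)=0$.

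Finally I would check the remaining qualitative claims. The support condition $\mrm{supp}(\sigma^\eps-\sigma^0)\subset\overline{\Om}$ is immediate, since $\sigma^\eps-\sigma^0=\eps\,\kappa$ and each $\kappa_0,\kappa_{ij}$ is supported in $\overline{\Om}$. Nontriviality, $\sigma^\eps\not\equiv\sigma^0$, follows from \eqref{Conditions1}--\eqref{Conditions2}: if $\kappa$ vanished identically, testing it against the products $\nabla u^0_{i'}\cdot\nabla u^0_{j'}$ would force every $\tau_{ij}$ to vanish, leaving $\kappa=\kappa_0\not\equiv0$, a contradiction. Admissibility $\sigma^\eps\ge c>0$ is secured by shrinking $\eps_0$ if necessary: since $\kappa$ stays bounded in $\mL^\infty(D)$ uniformly over $\tau\in\mathbb{B}_\gamma$, the perturbation $\eps\,\kappa$ can be made arbitrarily small in $\mL^\infty$, preserving strict positivity.

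As for the main obstacle, the structural reduction above is routine once the normalizations \eqref{Conditions1}--\eqref{Conditions2} are available; the genuine difficulty is entirely deferred to the two assumed ingredients. Producing admissible $\kappa_{ij}$ realizing the rigid orthogonality pattern \eqref{Conditions2} (Lemma~\ref{lemmaLinearIndep}) rests on the linear independence of the products $\nabla u^0_{i'}\cdot\nabla u^0_{j'}$, which is delicate and --- as the introduction cautions --- is only established for $d=2$ and the unit reference conductivity. The contraction estimate (Lemma~\ref{lemmaTech}) further needs uniform Lipschitz control of the map $\tau\mapsto\tilde u^\eps_n$, i.e.\ of the dependence of the solution of \eqref{pbDefOrder1} on the coefficient perturbation, which is exactly where the Lax--Milgram bound \eqref{pertnorm} carries the weight.
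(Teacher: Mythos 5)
Your proposal is correct and follows essentially the same route as the paper: the reduction of $\mathscr{M}(\sigma^\eps)=0$ via \eqref{ExpressKappa}, \eqref{Conditions1}--\eqref{Conditions2} and \eqref{FFPTotal} to the fixed point problem \eqref{PbFixedPoint}, solved by the Banach fixed point theorem on $\mathbb{B}_\gamma$ using Lemma~\ref{lemmaTech}, followed by the same checks of support, nontriviality (via the orthogonality conditions) and positivity of $\sigma^\eps$. Your identification of where the real difficulties lie (Lemmas~\ref{lemmaLinearIndep} and~\ref{lemmaTech}) also matches the paper's structure, which proves the proposition precisely ``modulo Lemma~\ref{lemmaTech}''.
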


\begin{remark}
If $\kappa_0$, $\kappa_{ij}$ supported in $\overline{\Om}$ and satisfying (\ref{Conditions1})-(\ref{Conditions2}) exist, Proposition~\ref{propoConstruction} indicates that the conductivities $\sigma^{\eps}$ and $\sigma^0$ are indistinguishable by EIT measurements with electrodes at $x_0, \dots, x_N$ modeled by the PEM. In other words, the relative PEM measurement map $M(\sigma^{\eps})$ defined by \eqref{Mope} satisfies
$$
M(\sigma^{\eps}) I = 0 
$$
for all electrode current patterns $I \in \R^{N+1}_\diamond$.
\end{remark}

The following lemma shows that $F^{\eps}$ is a contraction as required by the analysis preceding Proposition~\ref{propoConstruction}.
\begin{lemma}\label{lemmaTech}
Let $\gamma>0$ be a fixed parameter.   
Then, there exists $\eps_0>0$ such that for all $\eps\in(0;\eps_0]$, the map $F^{\eps}$ is a contraction on the invariant set $\mathbb{B}_\gamma$ defined by \eqref{Bg}.
\end{lemma}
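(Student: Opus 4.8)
The plan is to verify the two defining properties of a contraction on an invariant set: that $F^\eps$ maps $\mathbb{B}_\gamma$ into itself, and that it is Lipschitz on $\mathbb{B}_\gamma$ with a constant strictly less than one, both holding once $\eps$ drops below a threshold $\eps_0=\eps_0(\gamma)$. The whole argument rests on the energy estimate \eqref{pertnorm} for the corrector $\tilde u^\eps_n$, combined with the observation that, for $\tau\in\mathbb{B}_\gamma$ and $\eps$ small, every quantity appearing in \eqref{defTermContra} is bounded by a constant $C(\gamma)$ that is \emph{independent of $\eps$}. Since $|\cdot|$ is a norm on the finite-dimensional space $S^N$, I am free to use norm equivalence throughout.

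First I would establish these uniform bounds. For $\tau\in\mathbb{B}_\gamma$ the decomposition \eqref{ExpressKappa} gives $\|\kappa\|_{\mL^\infty(\Om)}\le\|\kappa_0\|_{\mL^\infty(\Om)}+C\gamma=:C(\gamma)$, and for $\eps$ small enough one has $\sigma^\eps\ge c>0$ uniformly (as noted before Proposition~\ref{propoConstruction}), so that $\|(\sigma^\eps)^{-1}\|_{\mL^\infty(D)}\le C$. Inserting these into \eqref{pertnorm} yields $\|\tilde u^\eps_n\|_{\mH^1(D)/\R}\le C(\gamma)$ uniformly over $\mathbb{B}_\gamma$ and small $\eps$. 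Consequently $|\tilde{\mathscr{M}}^\eps(\tau)|\le C(\gamma)$ by \eqref{defTermContra} and Cauchy--Schwarz, whence $|F^\eps(\tau)|\le\eps\,C(\gamma)$. Choosing $\eps_0$ so small that $\eps_0 C(\gamma)\le\gamma$ gives the invariance $F^\eps(\mathbb{B}_\gamma)\subset\mathbb{B}_\gamma$.

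The heart of the proof, and the main obstacle, is the Lipschitz estimate for $\tilde{\mathscr{M}}^\eps$. Given $\tau,\tau'\in\mathbb{B}_\gamma$, write $\kappa,\kappa'$ for the corresponding perturbations \eqref{ExpressKappa}, set $\sigma^\eps=\sigma^0+\eps\kappa$ and ${\sigma'}^\eps=\sigma^0+\eps\kappa'$, and let $\tilde u^\eps_i,v^\eps_i$ denote the respective solutions of \eqref{pbDefOrder1}. I would split
$$
\tilde{\mathscr{M}}^\eps_{ij}(\tau)-\tilde{\mathscr{M}}^\eps_{ij}(\tau')=\int_{D}(\kappa-\kappa')\,\nabla\tilde u^\eps_i\cdot\nabla u^0_j\,dx+\int_{D}\kappa'\,\nabla(\tilde u^\eps_i-v^\eps_i)\cdot\nabla u^0_j\,dx.
$$
The first integral is bounded by $C(\gamma)|\tau-\tau'|$ at once, since $\kappa-\kappa'=\sum_{ij}(\tau_{ij}-\tau'_{ij})\kappa_{ij}$ gives $\|\kappa-\kappa'\|_{\mL^\infty(\Om)}\le C|\tau-\tau'|$. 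For the second integral the difficulty is that both the source and the leading coefficient of \eqref{pbDefOrder1} depend on $\tau$. I would subtract the two corrector equations and freeze the coefficient by rewriting the principal part as $\sigma^\eps\nabla(\tilde u^\eps_i-v^\eps_i)+\eps(\kappa-\kappa')\nabla v^\eps_i$, using ${\sigma^\eps-{\sigma'}^\eps=\eps(\kappa-\kappa')}$. Then $w:=\tilde u^\eps_i-v^\eps_i$ solves a homogeneous-Neumann problem of the type \eqref{pbDefOrder1} with right-hand side $-\div\big((\kappa-\kappa')\nabla u^0_i\big)-\eps\,\div\big((\kappa-\kappa')\nabla v^\eps_i\big)$. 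Applying the Lax--Milgram estimate exactly as in \eqref{pertnorm}, and using the already established uniform bounds $\|u^0_i\|_{\mH^1(\Om)/\R}\le C$ and $\|v^\eps_i\|_{\mH^1(D)/\R}\le C(\gamma)$, gives $\|w\|_{\mH^1(D)/\R}\le C(\gamma)(1+\eps)|\tau-\tau'|$.

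Combining the two pieces yields $|\tilde{\mathscr{M}}^\eps(\tau)-\tilde{\mathscr{M}}^\eps(\tau')|\le C(\gamma)|\tau-\tau'|$ for $\eps$ bounded, hence $|F^\eps(\tau)-F^\eps(\tau')|\le\eps\,C(\gamma)|\tau-\tau'|$ by \eqref{DefMapContra}. Shrinking $\eps_0$ further, if necessary, so that $\eps_0 C(\gamma)\le\tfrac12$ makes $F^\eps$ a contraction with ratio $\tfrac12$ on $\mathbb{B}_\gamma$; taking $\eps_0$ to be the minimum of the thresholds from the invariance and the contraction steps finishes the argument. The only genuinely delicate point is the coefficient-dependence in the corrector difference, which is handled cleanly by the ``freeze $\sigma^\eps$ and move the $\eps(\kappa-\kappa')$ term to the right-hand side'' device above; everything else amounts to bookkeeping with constants that stay uniform on the bounded parameter ball $\mathbb{B}_\gamma$.
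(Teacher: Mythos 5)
Your proof is correct and follows essentially the same route as the paper: the key step of subtracting the two corrector equations and freezing the coefficient $\sigma^\eps$ (moving the $\eps(\kappa-\kappa')\nabla\tilde u^{\eps}{}'$ term to the right-hand side) is exactly the paper's equation \eqref{help}, followed by the same Lax--Milgram and uniform-bound arguments. The only cosmetic differences are that you prove invariance by bounding $|F^\eps(\tau)|$ directly rather than via $|F^\eps(0)|$ plus the Lipschitz estimate, and that you write out explicitly the two-term splitting of $\tilde{\mathscr{M}}^\eps(\tau)-\tilde{\mathscr{M}}^\eps(\tau')$ which the paper leaves implicit.
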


\begin{proof}
Assume $\gamma>0$ is given. Let $\tau, \, \tau' \in \mathbb{B}_\gamma$ be arbitrary, $\kappa, \, \kappa' \in \mL^\infty(\Omega)$ the corresponding perturbations defined by \eqref{ExpressKappa}, and $\tilde{u}^{\eps}_n, \, \tilde{u}^{\eps}_n{}' \in \mH^1(D) / \R$ the associated solutions to (\ref{pbDefOrder1}), with $\sigma^\eps = \sigma^0 + \eps \kappa$ and $\sigma^\eps = \sigma^0 + \eps \kappa'$, respectively. By subtracting the equations defining $\tilde{u}^{\eps}_n$ and $\tilde{u}^{\eps}_n{}'$ and employing the short hand notation $w = \tilde{u}^{\eps}_n - \tilde{u}^{\eps}_n{}'$, we have
\begin{equation}
\label{help}
\div\big(\sigma^\eps \nabla w \big)  =  -\div\big((\kappa - \kappa') \nabla (u^0_n + \eps \tilde{u}^{\eps}_n{}') \big)  \quad\mbox{in }D,\qquad \nu \cdot \sigma^0\nabla w = 0\quad\mbox{on }\partial D.
\end{equation}
From \eqref{interiorR}, \eqref{pertnorm}, \eqref{perturbedConductivity} and \eqref{ExpressKappa} it easily follows that
$$
\| u^0_n + \eps \tilde{u}^{\eps}_n{}' \|_{\mH^1(\Omega)/\R} \leq C
$$
uniformly for all $\eps \in [0; \eps_0]$ if $\eps_0 > 0$ is chosen small enough. Subsequently, an application of the Lax--Milgram lemma to \eqref{help} results in the estimate
$$
\| \tilde{u}^{\eps}_n - \tilde{u}^{\eps}_n{}' \|_{\mH^1(\Omega)/\R} \leq C \| \kappa - \kappa' \|_{\mL^\infty(\Omega)} \| u^0_n + \eps \tilde{u}^{\eps}_n{}' \|_{\mH^1(\Omega)/\R} \leq C \| \kappa - \kappa' \|_{\mL^\infty(\Omega)} \leq C |\tau-\tau'|
$$   
where all occurrences of $C > 0$ are independent of $\eps \in [0; \eps_0]$.

In particular,
$$
\big| \tilde{\mathscr{M}}^\eps(\tau) - \tilde{\mathscr{M}}^\eps(\tau') \big|
\leq C |\tau-\tau'|
$$
by virtue of \eqref{defTermContra}, the Cauchy--Schwarz inequality, \eqref{interiorR} and \eqref{pertnorm}.  
The definition (\ref{DefMapContra}) then implies that
\begin{equation}\label{EstimContra}
|F^{\eps}(\tau)-F^{\eps}(\tau')|\le C\,\eps\,|\tau-\tau'| \qquad \textrm{for all} \ \tau,\tau'\in\mathbb{B}_\gamma,
\end{equation}
where $C >0$ is independent of $\eps \in [0; \eps_0]$. Moreover, noting that
$|F^{\eps}(0)| \le C\,\eps$ due to \eqref{defTermContra}, \eqref{interiorR} and \eqref{pertnorm}, we deduce from \eqref{EstimContra} that also
\begin{equation}
\label{invariant}
|F^{\eps}(\tau)| \le C\,\eps
\end{equation}
for all $\tau\in\mathbb{B}_\gamma$. Reducing $\eps_0$ if necessary, \eqref{EstimContra} and \eqref{invariant} finally show that the map $F^{\eps}$ is a contraction on the invariant set $\mathbb{B}_\gamma$ for all $\eps \in [0; \eps_0]$.
\end{proof}

Let us denote by $\tau^{\mrm{sol}}\in\mathbb{B}_\gamma$ the unique solution of the problem (\ref{PbFixedPoint}). As a side product, the previous proof ensures the existence of a constant $C_0>0$ independent of $\eps \in (0; \eps_0]$ such that 
\begin{equation}\label{RelAPos}
|\tau^{\mrm{sol}}| = |F^{\eps}(\tau^{\mrm{sol}})| \le C_0\,\eps \qquad \textrm{for all} \ \eps\in(0;\eps_0].
\end{equation}
Combined with \eqref{ExpressKappa}, this tells us that $\kappa$ is equal to $\kappa_0$ at the first order.

At this stage, the remaining job consists in showing that there are functions $\kappa_0$, $\kappa_{ij}$ supported in $\overline{\Om}$ satisfying (\ref{Conditions1})--(\ref{Conditions2}). The following lemma is a classical result on dual basis (see, e.g., \cite[Lemma 4.14]{Kres14}); we present its proof here because it will be utilized in the algorithm of Section~\ref{sec:algo}.

\begin{lemma}\label{lemmaLinearIndep}
There are functions $\kappa_0, \, \kappa_{ij} \in L^\infty(D)$ supported in $\overline{\Om}$ and satisfying (\ref{Conditions1})--(\ref{Conditions2}) if and only if $\{\nabla u^0_{i}\cdot \nabla u^0_{j}\}_{1\le i\le j\le N}$ is a family of linearly independent functions on $\Om$.
\end{lemma}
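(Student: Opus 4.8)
The plan is to recognize this as the standard fact that a finite family in a Hilbert space admits a biorthogonal (dual) family precisely when it is linearly independent, and to make the function-space bookkeeping precise. First I would fix notation: enumerate the pairs $(i,j)$ with $1\le i\le j\le N$ and set $g_{ij}:=\nabla u^0_i\cdot\nabla u^0_j$ on $\Om$. Since $\Om\Subset D$ and each $u^0_n$ solves the homogeneous equation $\div(\sigma^0\nabla u^0_n)=0$ away from $\partial D$, interior elliptic regularity gives $u^0_n\in\mathscr{C}^{\infty}(\overline{\Om})$, so every $g_{ij}$ is smooth on $\overline{\Om}$ and in particular lies in $\mL^\infty(\Om)\subset\mL^2(\Om)$. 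Because each of the functions $\kappa_0,\kappa_{ij}$ is supported in $\overline{\Om}$, every integral in (\ref{Conditions1})--(\ref{Conditions2}) reduces to the $\mL^2(\Om)$ pairing $\int_{\Om}\kappa\,g_{ij}\,dx$. Thus the conditions say exactly that $\kappa_0$ is $\mL^2(\Om)$-orthogonal to all $g_{i'j'}$ and that $\{\kappa_{ij}\}$ is biorthogonal to $\{g_{i'j'}\}$.

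The direction ``existence $\Rightarrow$ linear independence'' is immediate: if $\sum_{i'\le j'}c_{i'j'}\,g_{i'j'}=0$ on $\Om$, then pairing this relation with $\kappa_{ij}$ and invoking (\ref{Conditions2}) collapses the sum to $c_{ij}=0$ for every pair, whence $\{g_{ij}\}$ is linearly independent.

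For the converse I would build the dual family explicitly through the Gram matrix, which also serves the numerical scheme of Section~\ref{sec:algo}. Assuming linear independence, the Gram matrix $G$ with entries $\int_{\Om}g_{ij}\,g_{i'j'}\,dx$ is symmetric positive definite, hence invertible; setting $\kappa_{ij}:=\sum_{i'\le j'}(G^{-1})_{(ij),(i'j')}\,g_{i'j'}$ and extending by zero outside $\Om$ yields functions in $\mL^\infty(D)$ supported in $\overline{\Om}$ that satisfy (\ref{Conditions2}) by a one-line computation using $G^{-1}G=\mrm{Id}$. Finally $\kappa_0=0$ trivially fulfils (\ref{Conditions1}) (and a nontrivial $\kappa_0$ is available whenever needed, since $\mL^2(\Om)$ is infinite dimensional while $\mathrm{span}\{g_{ij}\}$ is finite dimensional).

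The argument is genuinely elementary once this setup is in place; the only point demanding care is the regularity/support bookkeeping of the first paragraph, namely verifying that each $g_{ij}$ is a bona fide $\mL^2(\Om)$ function so that $G$ is well defined, and that the constructed $\kappa_{ij}$ inherit $\mL^\infty(D)$-membership with support in $\overline{\Om}$. I would expect no substantive obstacle beyond this, which is why the result can be cited as classical; the explicit Gram-matrix form is retained only because it is reused for the algorithm.
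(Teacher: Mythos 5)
Your proof is correct and takes essentially the same route as the paper: the forward implication by pairing a vanishing linear combination with the dual functions $\kappa_{ij}$, and the converse by inverting the Gram matrix of the (smooth, hence bounded on $\overline{\Om}$) products $\nabla u^0_i\cdot\nabla u^0_j$ and extending by zero, which is exactly the paper's construction \eqref{defTermInvert}. The only deviation is your choice $\kappa_0=0$, which satisfies the letter of the lemma, whereas the paper builds a nontrivial $\kappa_0$ by projecting an arbitrary $\kappa_0^{\#}\notin\mrm{span}\{\kappa_{ij}\}$, since $\kappa_0\not\equiv 0$ is what makes the perturbation nonzero in Proposition~\ref{propoConstruction}; your closing parenthetical correctly covers this point.
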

\begin{proof}
$\star$ Assume there are functions $\kappa_{ij}$ verifying (\ref{Conditions2}). If $\alpha_{ij}$ are real coefficients such that 
\[
\sum_{j=1}^{N}\sum_{i=1}^{j} \alpha_{ij}\,\nabla u^0_{i}\cdot \nabla u^0_{j} =0\qquad\mbox{in }\Om,
\]
then multiplying by $\kappa_{i'j'}$ and integrating over $D$, we find that $\alpha_{i'j'}=0$. This allows to show that $\alpha_{ij}=0$ for all $1\le i \le j \le N$ and proves that the family $\{\nabla u^0_{i}\cdot \nabla u^0_{j}\}_{1\le i\le j\le N}$ is linearly independent. 

$\star$ Now, assume that $\{\nabla u^0_{i}\cdot \nabla u^0_{j}\}_{1\le i\le j\le N}$ is a family of linearly independent functions on $\Om$. To simplify the notation, we introduce the auxiliary functions $\psi_k\in\mrm{L}^2_{\mrm{loc}}(D)$, $k=1,\dots ,K=N(N+1)/2$, such that
\[
\begin{array}{ccc}
\psi_1 = \nabla u^0_1\cdot\nabla u^0_1,\\[4pt]
\psi_2 = \nabla u^0_2\cdot\nabla u^0_1, & \quad\psi_3 = \nabla u^0_2\cdot\nabla u^0_2,\\[4pt]
\psi_4 = \nabla u^0_3\cdot\nabla u^0_1, & \quad\psi_5 = \nabla u^0_3\cdot\nabla u^0_2, & \qquad\dots\quad, 
\end{array}
\]
as well as the symmetric matrix $\mathbb{A} \in \R^{K \times K}$ given elementwise by
\[
\mathbb{A}_{kk'} = \int_{\Om} \psi_{k}\,\psi_{k'} \,d x, \qquad k,k' = 1, \dots, K.
\]
If $a = (\alpha_1,\dots,\alpha_K)^{\top}\in\ker\,\mathbb{A}$, defining $v=\sum_{k=1}^K\alpha_{k}\,\psi_{k}$, we find that 
$$
\int_{\Om} v^2 \,d x = a^{\top} \! \mathbb{A} \, a = 0.
$$ 
We deduce that $v=0$ in $\Om$, and thus $\alpha_1 = \cdots = \alpha_K =0$ because, by assumption, the family $\{\psi_k\}_{1\le k \le K}$ is linearly independent. As a consequence, $\mathbb{A}$ is invertible. 

Consider the functions $\tilde{\kappa}_1,\dots,\tilde{\kappa}_K$ defined via
\begin{equation}\label{defTermInvert}
\tilde{\kappa}_k = \sum_{k'=1}^{K}\mathbb{A}_{kk'}^{-1}\,\psi_{k'}|_{\Om}\quad\mbox{in }\Om\qquad\mbox{ and }\qquad \tilde{\kappa}_k = 0 \quad\mbox{in }D\setminus\overline{\Om}.
\end{equation}
In particular, $\tilde{\kappa}_k \in \mL^\infty(D)$ because $\psi_{k'}$ are smooth in a neighborhood of $\Omega$.
We have 
\[
\int_{D} \tilde{\kappa}_k\,\psi_{l}\,dx = \sum_{k'=1}^{K} \mathbb{A}_{kk'}^{-1}\mathbb{A}_{k'l} = \begin{array}{|ll} 1 & \mbox{ if } k=l,\\
0 & \mbox{ else}.\\
\end{array}
\]
By renumbering $\tilde{\kappa}_k$, this proves the existence of functions $\kappa_{ij}$ verifying the conditions (\ref{Conditions2}).  Finally, we construct $\kappa_0\not\equiv0$ that satisfies (\ref{Conditions1}): 
\begin{equation}\label{projKappa0}
\kappa_0 = \kappa^{\#}_0 - \sum_{j=1}^{N}\sum_{i=1}^{j} \left(\int_{D} \kappa^{\#}_0\,\kappa_{ij} \,d x\right)\,\kappa_{ij},
\end{equation}
where $\kappa^{\#}_0$ is an arbitrary $L^\infty$-function such that $\mrm{supp}(\kappa^{\#}_0)\subset\overline{\Om}$ and $\kappa^{\#}_0\notin \mrm{span}\{\kappa_{ij}\}_{1\le i\le j\le N}$.
\end{proof}

\section{Construction of invisible conductivities in two dimensions}
\label{sec:2D}

In this section, we study the two-dimensional case with 
\[
\sigma^0\equiv1. 
\]
In order to complement Proposition~\ref{propoConstruction}, our goal is to demonstrate that there exist functions $\kappa_0, \, \kappa_{ij} \in L^\infty(D)$ supported in a given Lipschitz domain $\Omega \Subset D$ and satisfying \eqref{Conditions1}--\eqref{Conditions2}. According to Lemma~\ref{lemmaLinearIndep}, this is equivalent to showing that $\{\nabla u^0_{i}\cdot \nabla u^0_{j}\}_{1\le i\le j\le N}$ is a family of linearly independent functions on $\Om$. Before setting to work, we remind the reader that the definition of $u^0_{n}$ can be found in (\ref{pbRef0}) and that $x_0,\dots, x_{N}$ are $N+1$ distinct  points located on $\partial D$ and corresponding to the positions of the electrodes.

\subsection{The case of  disk}
 
\begin{proposition}\label{propoLinearIndDisk}
Assume that $\sigma^0\equiv1$ and let $D \subset \R^2$ be the open unit disk. Then, $\{\nabla u^0_{i}\cdot \nabla u^0_{j}\}_{1\le i\le j\le N}$ is a family of linearly independent functions on any nonempty Lipschitz domain $\Om \Subset D$.
\end{proposition}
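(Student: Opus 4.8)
The plan is to reduce the desired linear independence to an elementary fact about rational functions, exploiting the explicit form of the solutions afforded by $\sigma^0\equiv 1$. Identifying $\R^2$ with $\Cplx$ and writing points as $z$, one checks directly that the solution of \eqref{pbRef0} (unique up to a constant) is
\[
u^0_n(z) = \frac{1}{\pi}\log\frac{|z - x_0|}{|z - x_n|}, \qquad n = 1,\dots,N,
\]
which is harmonic in $D$ since $x_0,\dots,x_N\in\partial D$ lie outside the open disk. (The radial derivative of each $\log|z-x_k|$ is constant on $\partial D$ away from $x_k$, the constants cancel in the difference, and the residual point masses at $x_n$ and $x_0$ reproduce the Neumann data $\delta_n-\delta_0$.) Hence $u^0_n=\mathrm{Re}\,w_n$ for the holomorphic function $w_n(z)=\pi^{-1}(\log(z-x_0)-\log(z-x_n))$, and writing $f_n:=w_n'$ yields the rational functions
\[
f_n(z) = \frac{1}{\pi}\left(\frac{1}{z - x_0} - \frac{1}{z - x_n}\right), \qquad n = 1,\dots,N.
\]
Under the identification $\R^2\cong\Cplx$ one has $\nabla u^0_n=\overline{f_n}$, so the target functions become $\nabla u^0_i\cdot\nabla u^0_j=\mathrm{Re}(\overline{f_i}\,f_j)$.

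Next I would transfer the hypothesis on $\Om$ to an identity on all of $D$ and then to the coefficients. Suppose $\sum_{1\le i\le j\le N}\alpha_{ij}\,\nabla u^0_i\cdot\nabla u^0_j=0$ on $\Om$. Since each $\nabla u^0_i\cdot\nabla u^0_j$ is real-analytic on the connected set $D$ and $\Om$ has nonempty interior, the combination vanishes on all of $D$. Collecting the coefficients into a real symmetric matrix $\beta=(\beta_{ij})$ (with entries equal to the $\alpha_{ij}$ up to factors of two on the diagonal, so that $\beta=0$ iff all $\alpha_{ij}=0$), this is equivalent to the vanishing of the Hermitian form
\[
\sum_{i,j=1}^{N}\beta_{ij}\,\overline{f_i(z)}\,f_j(z) = 0, \qquad z\in D.
\]
The main obstacle is to deduce $\beta=0$ from this single scalar relation, the difficulty being that $\overline{f_i}$ and $f_j$ are not independent quantities. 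I would resolve it by complexification: introducing the holomorphic functions $g_i(w):=\overline{f_i(\overline{w})}$, so that $g_i(\overline{z})=\overline{f_i(z)}$, the relation says that $\Phi(z,w):=\sum_{i,j}\beta_{ij}\,g_i(w)\,f_j(z)$, holomorphic on $D\times D$, vanishes on the totally real set $\{w=\overline{z}\}$; a power-series comparison then forces $\Phi\equiv 0$ on $D\times D$. Fixing $z$ and letting $w$ vary, linear independence of $\{g_i\}$ (equivalent to that of $\{f_i\}$) gives $\sum_j\beta_{ij}f_j\equiv 0$ for every $i$, and linear independence of $\{f_j\}$ then yields $\beta=0$.

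It therefore remains to prove that $\{f_n\}_{n=1}^N$ is linearly independent over $\Cplx$, which is immediate from the partial-fraction structure: a relation $\sum_n c_n f_n\equiv 0$ reads $(\sum_n c_n)(z-x_0)^{-1}=\sum_n c_n(z-x_n)^{-1}$, and since $x_0,\dots,x_N$ are distinct, matching residues at each $x_n$ with $n\ge 1$ forces $c_n=0$. By Lemma~\ref{lemmaLinearIndep} this establishes the existence of the required $\kappa_0,\kappa_{ij}$, and combining the three steps completes the proof. I expect the complexification step to be the only genuinely delicate point, the remaining arguments being direct computations.
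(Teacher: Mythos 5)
Your proof is correct, but it takes a genuinely different route from the paper's. Both arguments start from the same explicit representation $u^0_n(x)=\pi^{-1}\ln\big(|x-x_0|/|x-x_n|\big)$ of the disk's Neumann functions and both use real-analyticity to propagate the assumed relation beyond $\Omega$. From there the paper stays in the real, distributional world: it first kills the diagonal coefficients $\alpha_{jj}$ and derives a sum rule for the off-diagonal ones by multiplying the relation by $|x-x_j|^2$, resp.\ $|x-x_0|^2$, and letting $x\to x_j$, resp.\ $x_0$; it then forms $w=\sum\alpha_{ij}u^0_iu^0_j$, whose distributional Laplacian is a combination of Dirac masses at the electrodes, subtracts an explicit correction $\tilde w$ with the same Laplacian, invokes the Liouville theorem for harmonic tempered distributions to conclude that $w-\tilde w$ is constant, and finally eliminates the remaining coefficients by matching logarithmic singularities at $x_N,x_{N-1},\dots$ recursively. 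You instead encode the gradients as $\nabla u^0_n=\overline{f_n}$ with $f_n$ rational with poles at $x_0,x_n\in\partial D$, rewrite the relation as the vanishing of the Hermitian form $\sum_{i,j}\beta_{ij}\overline{f_i}f_j$ on $D$, and complexify: since the unit disk is invariant under conjugation, $\Phi(z,w)=\sum_{i,j}\beta_{ij}g_i(w)f_j(z)$ is holomorphic on $D\times D$ and vanishes on the totally real set $\{w=\bar z\}$, hence identically (by the $z^m\bar z^n$ power-series comparison), after which linear independence of rational functions with distinct poles, applied twice, gives $\beta=0$. Each of your steps --- the formula for $u^0_n$, the identity $\nabla u^0_i\cdot\nabla u^0_j=\mathrm{Re}(\overline{f_i}f_j)$, the complexification, and the residue argument --- is sound. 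Your route buys a shorter and cleaner conclusion, with no distribution theory, no Liouville theorem and no recursive singularity analysis, everything reducing to the elementary independence of partial fractions; the price is that it exploits the conjugation symmetry of the disk (so that the reflected functions $g_i$ remain holomorphic on $D$), which is harmless here because the general smooth domain is anyway reduced to the disk by a conformal map in Proposition~\ref{propoLinearInd2DGene}.
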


\begin{proof}
In this simple geometry, it is known that (see, e.g., \cite{HaHH11})
\begin{equation}\label{defFromNeumannFunction}
u^0_n(x) = v_n(x)-v_0(x),\qquad  x\in D, \ \ n = 1, \dots, N,
\end{equation}
where $v_n$ is defined by
\begin{equation}\label{defvk}
v_n(x)= - \frac{1}{\pi}\,\ln|x-x_n|,\qquad x\in D.
\end{equation}
Let $\alpha_{ij} \in \R$ be such that 
\begin{equation}\label{relationInitLinearInd}
\sum_{j=1}^{N}\sum_{i=1}^{j}\alpha_{ij} \,\nabla u^0_i\cdot \nabla u^0_j =0\qquad \mbox{in }\Om, 
\end{equation}
which can be written out explicitly:
\begin{equation}\label{SystemStep0}
\sum_{j=1}^{N}\sum_{i=1}^{j}\alpha_{ij}  \left(\frac{x-x_i}{|x-x_i|^2}-\frac{x-x_0}{|x-x_0|^2}\right)\cdot\left(\frac{x-x_j}{|x-x_j|^2}-\frac{x-x_0}{|x-x_0|^2}\right)=0 \qquad \mbox{in }\Om.
\end{equation}
By analyticity, \eqref{SystemStep0} holds in fact in all of $\R^2\setminus\mathscr{E}$, where $\mathscr{E}:=\cup_{n=0}^{N}\{x_n\}$. 
Multiplying \eqref{SystemStep0} by $|x-x_j|^2$ and letting $x$ tend to $x_j$, we see that $\alpha_{jj}=0$ for all  $j=1, \dots, N$. Moreover, multiplying by $|x-x_0|^2$ and letting  $x$ go to $x_0$, we obtain the relation 
\begin{equation}\label{relationSum}
\sum_{j=2}^{N}\sum_{i=1}^{j-1}\alpha_{ij}=0.
\end{equation}

Let us now introduce 
\begin{equation}\label{EquaUsingSum}
w := \sum_{j=2}^{N}\sum_{i=1}^{j-1}\alpha_{ij} \,
u^0_i\,u^0_j = \sum_{j=2}^{N}\sum_{i=1}^{j-1}\alpha_{ij} \left(v_i\,v_j-v_0\,(v_i+v_j)\right)\qquad\mbox{in  }\R^2\setminus\mathscr{E}, 
\end{equation}
where the second equality is a consequence of (\ref{defFromNeumannFunction}) and  (\ref{relationSum}). Using (\ref{relationInitLinearInd}) and the fact that $v_n$ defined in (\ref{defvk}) is harmonic in $\R^2\setminus\{x_n\}$, we find that $\Delta w=0$ in $\R^2\setminus\mathscr{E}$. Observing that $v_n$ is a multiple of the fundamental solution for the Laplacian centered at $x_n$, a standard computation then yields
\[
\Delta w = - 2\sum_{j=2}^{N}\sum_{i=1}^{j-1}\alpha_{ij}\,\Big((v_j(x_i)-v_0(x_i))\,\delta_i +(v_i(x_j)-v_0(x_j))\,\delta_j
-(v_i(x_0)+v_j(x_0))\,\delta_0\Big) \qquad {\rm in} \ \R^2.
\]
Motivated by this expression, we introduce another auxiliary function, namely 
\begin{equation}\label{decompow2}
 \tilde{w} = \sum_{j=2}^{N}\sum_{i=1}^{j-1}\alpha_{ij}\,\Big((v_j(x_i)-v_0(x_i))\,v_i +(v_i(x_j)-v_0(x_j))\,v_j
-(v_i(x_0)+v_j(x_0))\,v_0\Big),
\end{equation}
which obviously satisfies $\Delta \tilde{w} = \Delta w$ in $\R^2$. 
Since $v_n$ belongs to $\mathscr{C}^{\infty}(\R^2\setminus\{x_n\})$, it is clear that $w-\tilde{w}\in\mH^{1-\eta}_{\mrm{loc}}(\R^2)$ for all $\eta>0$. Furthermore, at infinity, it holds that $w-\tilde{w}=o(|x|)$. This allows to prove that $w-\tilde{w}$ is a tempered distribution (see, e.g., \cite[Chapter VII]{Horm03}) on $\R^2$. 

A classical extension of the Liouville theorem for harmonic functions (\cite[Chapter 3, Proposition 4.6]{Tayl11}) indicates that the only harmonic tempered distributions in $\R^d$ are the harmonic polynomials; in particular, any harmonic tempered distribution that behaves as $o(|x|)$ at infinity is a constant. This implies that $w-\tilde{w}=C$ in $\R^2$ for some $C \in \R$. On the other hand, reordering the terms in (\ref{EquaUsingSum}) and (\ref{decompow2}), we obtain the representation
\[
w-\tilde{w} = v_N\sum_{i=1}^{N-1}\alpha_{iN} \,\Big((v_i-v_i(x_N))-(v_0-v_0(x_N))\Big)+\hat{w}_N
\]
where $\hat{w}_N$ is a function that is analytic in a neighborhood of $x_N$. We conclude that 
$$
v_N\sum_{i=1}^{N-1}\alpha_{iN} \,\Big((v_i-v_i(x_N))-(v_0-v_0(x_N))\Big)
$$ 
must also be analytic about $x_N$. Because $v_N$ admits a logarithmic singularity at $x_N$, this is possible if and only if
\begin{equation}\label{expressionZero}
\sum_{i=1}^{N-1}\alpha_{iN} ((v_i-v_i(x_N))-(v_0-v_0(x_N)))=0
\end{equation}
in a neighborhood of $x_N$. By analytic continuation, (\ref{expressionZero}) must hold on all of $\R^2\setminus\mathscr{E}$. Due to the singular behavior of the function $v_i$ at $x_i$, this means that $\alpha_{iN}=0$ for all $i=1,\dots,N-1$.

Studying the behavior of $w-\tilde{w}$ successively at $x_{N-1},\dots,x_2$, one shows analogously that $\alpha_{ij}=0$ for all $1 \le i< j\le N-2$. As the diagonal elements $\alpha_{jj}$, $j=1,\dots, N$, were deduced to vanish already at the beginning of the proof, this shows that $\{\nabla u^0_{i}\cdot \nabla u^0_{j}\}_{1\le i\le j\le N}$ is a family of linearly independent functions on $\Om$.
\end{proof}

\subsection{General case in two dimensions}

With the help of conformal mappings, Proposition~\ref{propoLinearIndDisk} can be generalized to the case of arbitrary smooth two-dimensional domains.
\begin{proposition}\label{propoLinearInd2DGene}
Assume that $\sigma^0\equiv1$. Let $D \subset \R^2$ be a simply connected and bounded domain with a $\mathscr{C}^{\infty}$-boundary. Then, $\{\nabla u^0_{i}\cdot \nabla u^0_{j}\}_{1\le i\le j\le N}$ is a family of linearly independent functions on any nonempty Lipschitz domain $\Om \Subset D$.
\end{proposition}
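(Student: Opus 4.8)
The plan is to reduce the general simply connected domain to the unit disk via a conformal map, exploiting the conformal invariance of the Laplacian together with the explicit relationship between the Neumann functions of the two domains. Let me sketch how I would carry this out.

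First I would invoke the Riemann mapping theorem: since $D$ is simply connected, bounded, and has a smooth boundary, there exists a conformal bijection $\Phi: D \to \mathbb{D}$ onto the unit disk $\mathbb{D}$, and by the Carathéodory–Kellogg theory the smoothness of $\partial D$ guarantees that $\Phi$ extends to a $\mathscr{C}^\infty$-diffeomorphism $\overline{D}\to\overline{\mathbb{D}}$ with nonvanishing derivative. I would denote the images of the electrode points by $y_n := \Phi(x_n) \in \partial\mathbb{D}$ and let $\hat u^0_n$ be the Neumann potentials on the disk solving (\ref{pbRef0}) with $\sigma^0\equiv1$ and source $\delta_{y_n}-\delta_{y_0}$.

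The key step is the transformation rule under conformal pullback. For $\sigma^0\equiv 1$ in two dimensions, the conductivity equation is precisely the Laplace equation, and harmonicity is preserved under composition with a holomorphic map. The delicate point is the behavior of the Neumann data: a conformal map does not preserve the Neumann boundary condition exactly, but for $\sigma^0\equiv1$ one has the well-known fact that $\nu\cdot\nabla$ transforms so that a point current source $\delta_{x_n}$ at the boundary pulls back (up to the boundary Jacobian $|\Phi'|$) to a point source at $y_n$; the essential reason this works cleanly is that the fundamental solution $-\tfrac{1}{2\pi}\ln|\cdot|$ transforms under a conformal map only by an additive harmonic correction, since $\ln|\Phi(x)-\Phi(x')| = \ln|x-x'| + \ln\bigl|\tfrac{\Phi(x)-\Phi(x')}{x-x'}\bigr|$ and the second term is harmonic in $x$ (the real part of a holomorphic function that is regular at $x=x'$). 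I would therefore establish that $u^0_n = \hat u^0_n \circ \Phi$ up to an additive constant, which is immaterial since only gradients enter.

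The final step is to transfer linear independence. Writing $J = \Phi' $ (complex derivative) and using the conformal chain rule $\nabla(\hat u\circ\Phi) = (D\Phi)^{\!\top}(\nabla \hat u)\circ\Phi$, together with $(D\Phi)^{\!\top}(D\Phi) = |\Phi'|^2 \, \mathrm{Id}$ for a conformal map, I would obtain
\[
\nabla u^0_i \cdot \nabla u^0_j = |\Phi'|^2 \,\bigl(\nabla \hat u^0_i \cdot \nabla \hat u^0_j\bigr)\circ\Phi \qquad \text{in } \Omega.
\]
Since $|\Phi'|^2$ is a fixed strictly positive smooth scalar factor, a relation $\sum_{i\le j}\alpha_{ij}\,\nabla u^0_i\cdot\nabla u^0_j = 0$ on $\Omega$ is equivalent, after dividing by $|\Phi'|^2$ and substituting $y=\Phi(x)$, to $\sum_{i\le j}\alpha_{ij}\,\nabla \hat u^0_i\cdot\nabla \hat u^0_j = 0$ on the Lipschitz domain $\Phi(\Omega)\Subset\mathbb{D}$. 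By Proposition~\ref{propoLinearIndDisk} applied on the disk, all $\alpha_{ij}$ vanish, which proves linear independence on $\Omega$.

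I expect the main obstacle to be the bookkeeping in the second step: verifying precisely how the point-source Neumann problem transforms under $\Phi$ and confirming that the additive harmonic corrections do not spoil the identification $u^0_n = \hat u^0_n\circ\Phi$ modulo constants. The conformal covariance of the Laplacian and of gradient inner products is standard and clean, so once the correspondence of the potentials is pinned down, the reduction to Proposition~\ref{propoLinearIndDisk} is immediate.
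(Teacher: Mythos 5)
Your proposal is correct and takes essentially the same route as the paper: reduce to the unit disk via a conformal map $\Phi$, identify $u^0_n = \hat u^0_n \circ \Phi$ (the paper handles this delicate point simply by citing \cite[Proof~of~Theorem~3.2]{HaHH11} rather than re-deriving it), and then invoke Proposition~\ref{propoLinearIndDisk}. The only divergence is in the transfer step: you use the chain rule and $(D\Phi)^{\top}(D\Phi)=|\Phi'|^2\,\mathrm{Id}$ to obtain $\nabla u^0_i\cdot\nabla u^0_j = |\Phi'|^2\,\bigl(\nabla\hat u^0_i\cdot\nabla\hat u^0_j\bigr)\circ\Phi$ and divide out the positive factor, whereas the paper writes $2\,\nabla u^0_i\cdot\nabla u^0_j = \Delta\bigl(u^0_i u^0_j\bigr)$ and uses that composition with a conformal map preserves harmonicity; both arguments are valid and equally short.
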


\begin{proof}
Denote the open unit disk by $B \subset \R^2$ and let $\Phi: D \to B$ be a conformal map of $D$ onto $B$. As $\partial D$ is smooth, $\Phi$ also defines a smooth diffeomorphism of $\partial D$ onto $\partial B$ \cite{Pommerenke92}. Let $\hat{u}^0_n \in \mH^{1-\eta}(B)/\R$, $\eta>0$, be the unique solution of
\begin{equation*}
\Delta \hat{u}^0_n  =  0 \quad\mbox{in }B,\qquad \nu \cdot \nabla \hat{u}^0_n = \hat{\delta}_n - \hat{\delta}_0 \quad\mbox{on }\partial B,
\end{equation*}
where $\hat{\delta_i} \in \mH^{-1/2-\eta}(\partial B)$, $i=0, \dots, N$, denotes the Dirac delta distribution supported at $\Phi(x_n) \in \partial B$. According to \cite[Proof~of~Theorem~3.2]{HaHH11}, it holds that
\begin{equation}\label{eq:conf}
u^0_{n} = \hat{u}^0_n \circ \Phi \qquad {\rm in} \ D
\end{equation}
for all $n=1, \dots, N$.

Let $\alpha_{ij} \in \R$ be such that 
\begin{equation*}
\sum_{j=1}^{N}\sum_{i=1}^{j}\alpha_{ij} \,\nabla u^0_i\cdot \nabla u^0_j =0\qquad \mbox{in }\Om.
\end{equation*}
Due to the harmonicity of $u^0_n$, $n=1,\dots,N$, in $D$, we obtain
\begin{equation*}
\Delta \Big( \sum_{j=1}^{N}\sum_{i=1}^{j}\alpha_{ij} \, \big(\hat{u}^0_i \, \hat{u}^0_j \big) \circ \Phi \Big) = \Delta \Big( \sum_{j=1}^{N}\sum_{i=1}^{j}\alpha_{ij} \,u^0_i \, u^0_j\Big) = 0
\qquad  \mbox{in }\Om.
\end{equation*}
Since a composition with the conformal map $\Phi^{-1}$ retains harmonicity, it holds that
$$
2 \sum_{j=1}^{N}\sum_{i=1}^{j}\alpha_{ij} \,\nabla \hat{u}^0_i\cdot \nabla \hat{u}^0_j = \Delta \Big( \sum_{j=1}^{N}\sum_{i=1}^{j}\alpha_{ij} \, \hat{u}^0_i \, \hat{u}^0_j \Big) = 0 \qquad {\rm in} \ \Phi(\Om).
$$
Because the family $\{\nabla \hat{u}^0_{i}\cdot \nabla \hat{u}^0_{j}\}_{1\le i\le j\le N}$ is linearly independent on $\Phi(\Om)$ by Proposition~\ref{propoLinearIndDisk}, it follows that
$\alpha_{ij} = 0$ for all $1 \leq i \leq j \leq N$ and the proof is complete.
\end{proof}

Combining Proposition \ref{propoConstruction}, Lemma \ref{lemmaLinearIndep} and Proposition \ref{propoLinearInd2DGene}, we obtain the main result of this article.
\begin{theorem}\label{MainTheorem}
Assume that $\sigma^0\equiv1$. Let $D \subset \R^2$ be a simply connected and bounded domain with a $\mathscr{C}^{\infty}$-boundary. For any compactly embedded Lipschitz domain $\Om \Subset D$, there exists a conductivity $\sigma^{\eps}\in\mrm{L}^{\infty}(D)$, with $\sigma^\eps \geq c >0$, such that $\sigma^{\eps}-\sigma^{0}\not\equiv0$, $\mbox{supp}(\sigma^{\eps}-\sigma^{0})\subset \overline{\Om}$ and
\[
\mathscr{M}_{i,j}(\sigma^\eps) = \langle \delta_i-\delta_{0},(\Lambda^\eps-\Lambda^0)(\delta_j-\delta_{0})\rangle_{\partial D}=0
\]
for all $i,j=1,\dots,N$. In other words, 
$$
M(\sigma^{\eps})I = 0 \qquad \mbox{for all} \ I \in \R^{N+1}_\diamond,
$$
where $M(\sigma^\eps): \R^{N+1}_\diamond \to \R^{N+1}/ \R$ is the relative PEM measurement map from \eqref{Mope} corresponding to the electrode locations $x_0, \dots, x_N \in \partial D$.
\end{theorem}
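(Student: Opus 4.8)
The plan is to assemble the three preceding results into a single chain of implications, since each of them has been tailored precisely to remove one obstruction in the construction; the theorem itself is then essentially bookkeeping.

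First I would invoke Proposition~\ref{propoLinearInd2DGene} with the given domain $D$ and the arbitrary Lipschitz subdomain $\Om\Subset D$. Because $\sigma^0\equiv1$ and $D$ is simply connected with a $\mathscr{C}^\infty$-boundary, the proposition guarantees that the family $\{\nabla u^0_{i}\cdot\nabla u^0_{j}\}_{1\le i\le j\le N}$ is linearly independent on $\Om$. This is the substantive analytic input, and all the real difficulty --- the passage to the fundamental solutions $v_n$, the Liouville-type argument for harmonic tempered distributions that behave as $o(|x|)$ at infinity, and the conformal transport from the unit disk to a general smooth domain --- is already discharged inside Propositions~\ref{propoLinearIndDisk} and~\ref{propoLinearInd2DGene}.

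Next I would apply the ``if'' direction of Lemma~\ref{lemmaLinearIndep}: linear independence of $\{\nabla u^0_{i}\cdot\nabla u^0_{j}\}$ on $\Om$ is \emph{equivalent} to the existence of functions $\kappa_0,\,\kappa_{ij}\in\mL^\infty(D)$, supported in $\overline{\Om}$, satisfying the duality/orthogonality conditions~\eqref{Conditions1}--\eqref{Conditions2}. Here I would be careful to select $\kappa_0\not\equiv0$, which is available via the explicit projection~\eqref{projKappa0}, since this is exactly what secures the nontriviality $\sigma^\eps-\sigma^0\not\equiv0$ of the perturbation in the final step. Feeding these $\kappa_0,\,\kappa_{ij}$ into Proposition~\ref{propoConstruction} then yields, for a small enough $\eps>0$, a conductivity $\sigma^\eps\in\mL^\infty(D)$ with $\sigma^\eps\ge c>0$, with $\mrm{supp}(\sigma^\eps-\sigma^0)\subset\overline{\Om}$ and $\sigma^\eps-\sigma^0\not\equiv0$, for which the relative measurement matrix $\mathscr{M}(\sigma^\eps)$ vanishes identically. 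The equivalent formulation $M(\sigma^\eps)I=0$ for all $I\in\R^{N+1}_\diamond$ then follows at once from Remark~\ref{remark}, which identifies the vanishing of the matrix $\mathscr{M}(\sigma^\eps)$ with that of the operator $M(\sigma^\eps)$ through the change of basis ${\rm e}^n-{\rm e}^0$.

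Since every hard step has been settled in advance, I do not anticipate a genuine obstacle in the proof of the theorem proper. The only points deserving explicit mention are that the two hypotheses $\sigma^0\equiv1$ and $D\subset\R^2$ are used \emph{solely} to license Proposition~\ref{propoLinearInd2DGene} (whereas Proposition~\ref{propoConstruction} and Lemma~\ref{lemmaLinearIndep} are dimension- and reference-conductivity-agnostic), and that choosing $\kappa_0\not\equiv0$ is what makes the produced perturbation nontrivial rather than vacuous. In this sense the ``combination'' advertised just before the statement is literally the whole proof.
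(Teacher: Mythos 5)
Your proposal is correct and follows exactly the paper's own route: the paper proves Theorem~\ref{MainTheorem} by combining Proposition~\ref{propoLinearInd2DGene} (linear independence), Lemma~\ref{lemmaLinearIndep} (existence of the dual family $\kappa_0,\kappa_{ij}$, with $\kappa_0\not\equiv0$ via \eqref{projKappa0}), and Proposition~\ref{propoConstruction} (the fixed point construction), with Remark~\ref{remark} giving the equivalence $\mathscr{M}(\sigma^\eps)=0 \Leftrightarrow M(\sigma^\eps)=0$. Your bookkeeping, including the observation that the hypotheses $\sigma^0\equiv1$ and $d=2$ enter only through Proposition~\ref{propoLinearInd2DGene}, matches the paper's intent precisely.
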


\begin{remark}
In \cite{Hyvonen13,Seis14} it has been shown that $\mathscr{M}(\sigma^\eps)=0$ if and only if $\sigma^\eps=\sigma^0 (\equiv 1)$ when the number of electrodes is countably infinite in two dimensions. Here, we have demonstrated that one can constructively find $\sigma^\eps\not\equiv\sigma^0$ such that $\mathscr{M}(\sigma^{\eps})=0$ in case the number of electrodes is finite.
\end{remark}

\section{Algorithmic implementation}
\label{sec:algo}
For a given Lipschitz domain $\Om $ such that $\overline{\Om}\subset D$, functions $\kappa_0$, $\kappa_{ij}$ satisfying \eqref{Conditions1}--\eqref{Conditions2} can be constructed by following the line of reasoning in the second part of the proof of Lemma~\ref{lemmaLinearIndep}, assuming the potentials $u_n^0$ verifying \eqref{pbRef0} are available. Notice that the bounded function $\kappa^{\#}_0$ with ${\rm supp(\kappa^{\#}_0)}\subset \overbar{\Omega}$ appearing in \eqref{projKappa0} can be chosen arbitrarily as long as it does not belong to ${\rm span}\{\kappa_{ij}\}_{1\leq i \leq j \leq N}$. As a consequence, finding $ \kappa_0^\#$ is almost trivial and allows a lot of freedom. 

We denote by $\tau^{k}$, $\kappa^k$, $u^{\eps, k}_n$, $\tilde{u}^{\eps, k}_n$ the realizations of $\tau$, $\kappa$,  $u^{\eps}_n$, $\tilde{u}^{\eps}_n$ at iteration $k\ge0$ of the algorithm; the aforementioned entities are introduced in \eqref{defTermContra}, \eqref{ExpressKappa}, \eqref{pbRef}, \eqref{decompoUeps}, respectively. Moreover, we set $\sigma^{\eps, k}=\sigma^0+\eps\kappa^k$. Using formulas \eqref{defTermContra} and \eqref{DefMapContra}, we recursively define 
\[
\tau^{k+1}_{ij} = -\eps\int_{D}\kappa^k\,\nabla \tilde{u}^{\eps,k}_i\cdot \nabla u^0_j \,d x, \qquad k \geq 0, \ 1\le i \le j \le N.
\]
Since $\eps\tilde{u}^{\eps,k}_n = u^{\eps,k}_n-u^{0}_n$ by \eqref{decompoUeps}, we obtain
\begin{equation}
\label{FPiter}
\tau^{k+1}_{ij} = \tau^{k}_{ij} - \int_{D}\kappa^k\,\nabla u^{\eps,k}_i\cdot \nabla u^0_j \,d x, \qquad k \geq 0, \ 1\le i \le j \le N,
\end{equation}
due to \eqref{Conditions1} and \eqref{Conditions2}.
In particular, remark that $|\tau^{k+1}_{ij}- \tau^{k}_{ij}| \le \eta$ implies $\mathscr{M}_{i j}(\sigma^{\eps, k})\le \eps\,\eta$ for any $\eta > 0$ by virtue of \eqref{eqnMeasure}.
To sum up, our algorithm for computing invisible conductivity perturbations is as follows:

\begin{alg}\label{alg:1}
Assume that the potentials $u_n^0$, $n=1,\dots, N$, are available.
Construct $\{\kappa_{ij}\}_{1\leq i \leq j \leq N}$, choose $\kappa_0^\# \notin {\rm span}\{\kappa_{ij}\}_{1\leq i \leq j \leq N} $ and compute $\kappa_0$. Select $\eps > 0$ and $\tau^0 \in S^N$. Run the fixed point iteration \eqref{FPiter} until the desired stopping criterion is met. Construct the invisible conductivity perturbation via \eqref{ExpressKappa}. If divergent behavior is observed, decrease $\eps$. 
\end{alg}

\begin{figure}[t!]
\begin{center}
\includegraphics[width=1\textwidth]{./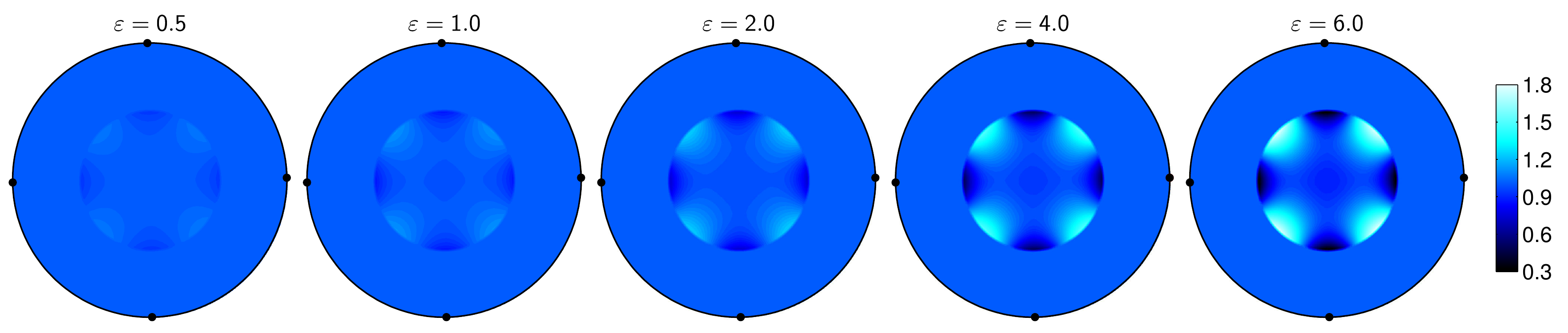} \\[0.2cm]
\raisebox{0.0cm}{\includegraphics[width=0.6\textwidth]{./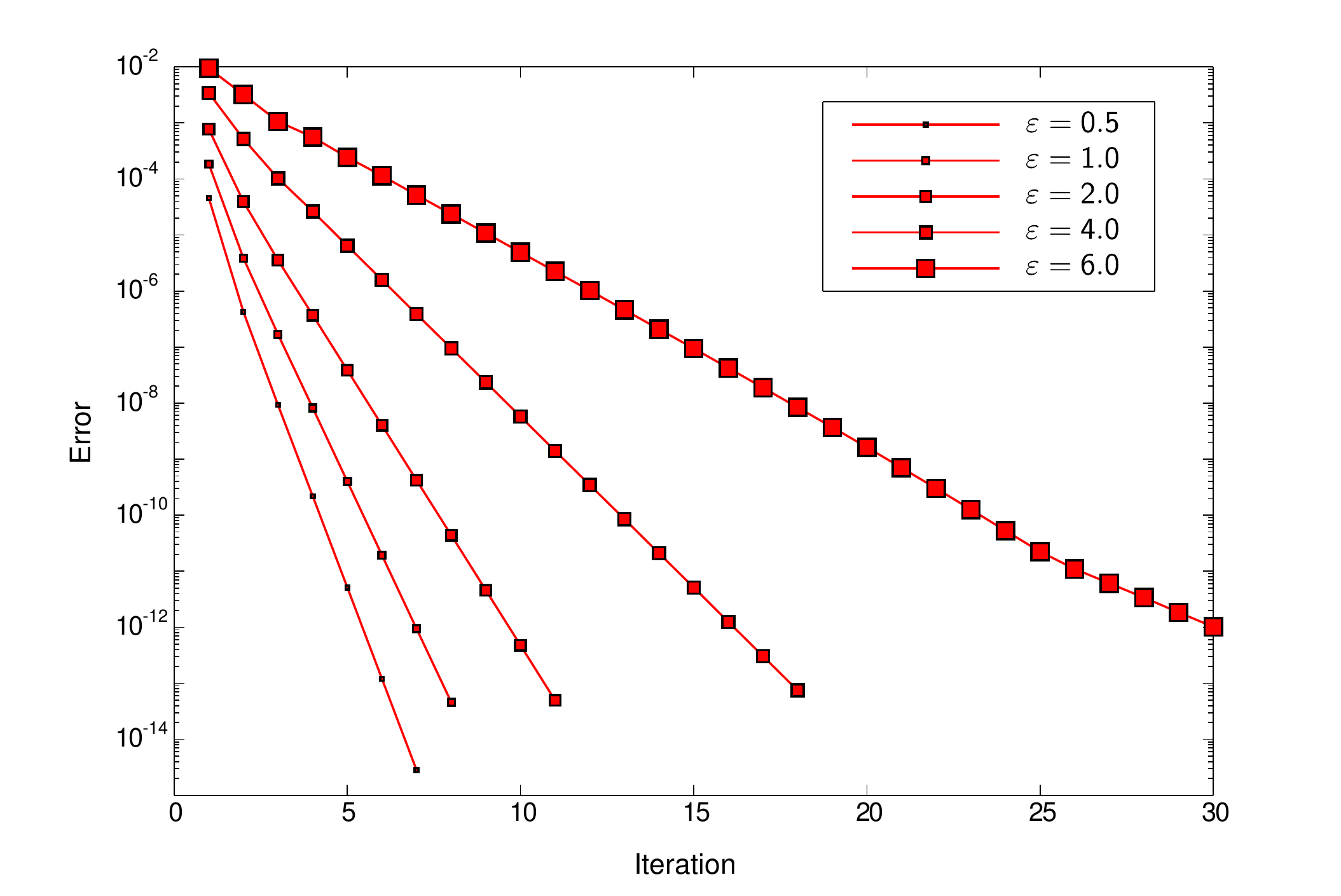}}
\raisebox{0.2cm}{\includegraphics[width=0.38\textwidth]{./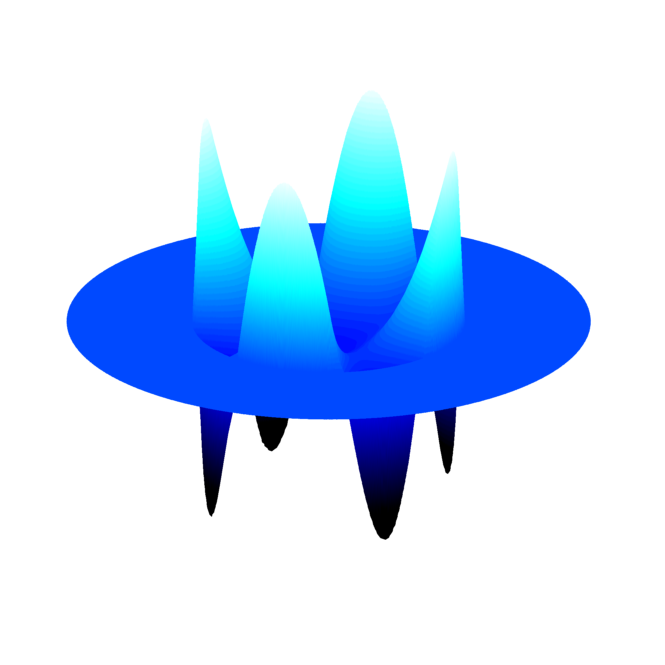}}
\caption{\label{fig:1}Top row: Outputs $\sigma^\eps$ of Algorithm~\ref{alg:1} with different values of the perturbation size parameter $ \eps > 0$. The point-like electrodes are located at the polar angles $ \theta = 1^ \circ, 91^ \circ, 181^ \circ, 271^ \circ $. Bottom left: The discrepancy \eqref{eq:tauerror} between consecutive iterates of Algorithm \ref{alg:1}. Bottom right: A surface plot of $\sigma^\eps$ corresponding to $ \eps = 6.0 $. } 
\end{center}
\vspace{-2mm}
\end{figure}

\section{Numerical experiments}
\label{sec:numerics}

In this section, we implement Algorithm~\ref{alg:1} by resorting to {\em finite element} (FE) approximations of the PEM. We are interested in validating the convergence of the fixed point iteration \eqref{FPiter} and visualizing the output conductivity $\sigma^\eps$ (cf.~\eqref{perturbedConductivity}) for different subdomains $\Omega$, `initial guesses' $\kappa_0^\#$ and (point) electrode configurations. It turns out that especially the choice of $\kappa_0^\#$ has a considerable effect on the output, which means that Algorithm~\ref{alg:1} can straightforwardly be used to construct several invisible conductivity perturbations for a given measurement configuration and $\Omega$.  The degree of indistinguishability of the produced $\sigma^\eps$ compared to the background conductivity $\sigma^0$  is also tested in the framework of the more realistic CEM: According to our simulations, the relative CEM measurements corresponding to smallish electrodes and the constructed conductivities fall below any reasonable measurement noise level. 

We only consider the homogeneous reference conductivity $\sigma^0\equiv1$ and choose $D\subset\R^2$ to be the unit disk, meaning that the functions $u^0_n$ satisfying \eqref{pbRef} are explicitly given by \eqref{defFromNeumannFunction}. Due to the Riemann mapping theorem, this geometric simplification does not severely reduce the generality of our (two-dimensional) numerical experiments: In any smooth simply connected domain $D \subset \R^2$ the potentials $u^0_n$ employed in the construction of $\kappa_{ij}$ can be computed using \eqref{eq:conf} as long as a conformal mapping sending $D$ onto the unit disk is available.

\begin{figure}[b!]
\begin{center}
\includegraphics[width=1\textwidth]{./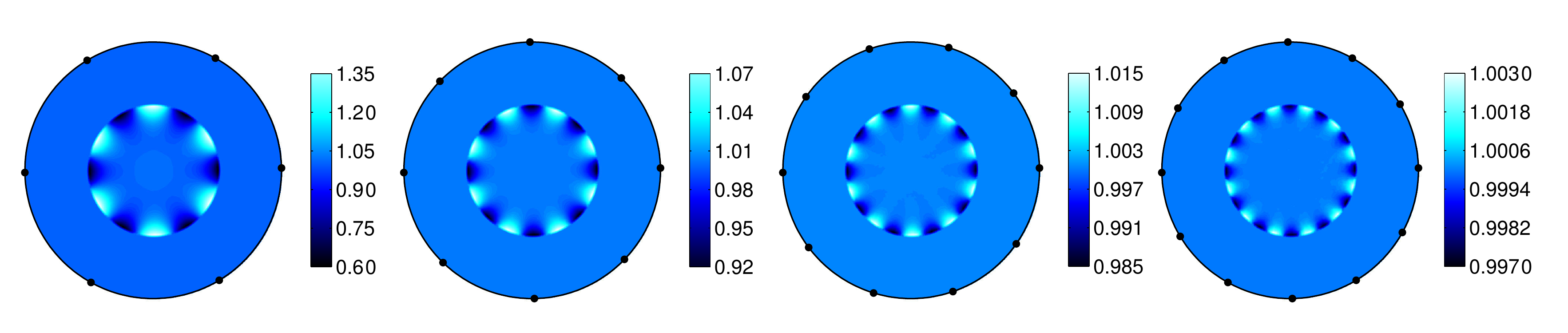}
\caption{\label{fig:2} Outputs of Algorithm~\ref{alg:1} for $ N = 5,7,9,11 $ (recall that $ N+1 $ is the number of electrodes). The point-like electrodes are located at the polar angles $ \theta_j = 1^\circ+\tfrac{j}{N+1}360^\circ $, $ j = 0,1,\ldots,N $. The values of $ \eps $ used in the computations are $14$, $12$, $8$ and $8$, respectively. 
} 
\end{center}
\vspace{-2mm}
\end{figure}

The functions $u_n^{\eps, k}$ needed when running Algorithm~\ref{alg:1} are computed at each iteration as follows: First, we (numerically) solve \eqref{pbDefOrder1} with $\sigma^{\eps} = \sigma^{\eps, k} = 1+\eps\kappa^k$ and $\kappa=\kappa^k$ to obtain $\tilde{u}^{\eps,k}_n$. We remind the reader that $\tilde{u}^{\eps,k}_n$ belongs to $\mH^1(D)/\R$ so it can be approximated using standard FE methods. Then, we set $u^{\eps,k}_n=u^{0}_n+\eps\tilde{u}^{\eps,k}_n$.
When computing $\tilde{u}^{\eps,k}_n$, we employ piecewise quadratic polynomial FE basis (Lagrange $ \mathbb{P}_2 $); the number of triangular elements is around $20\,000$ in all tests. The conductivity perturbation `shape functions' $\kappa_{ij}$ are evaluated (and interpolated) on the FE mesh with the help of \eqref{defTermInvert} and \eqref{defFromNeumannFunction}. Without exception, we use the stopping rule
\begin{equation}\label{eq:tauerror}
\sum_{i=1}^N\sum_{j=1}^N |\tau_{ij}^{k+1}-\tau_{ij}^k| < 10^{-8}
\end{equation}
for Algorithm~\ref{alg:1}, that is, the difference between consecutive iterates in the scheme \eqref{FPiter} is monitored. The choice of the threshold value $10^{-8}$ in \eqref{eq:tauerror} is a slight overkill since for practical EIT the noise level in {\em relative} measurements is typically over $1\%$ \cite{Cheney99}, and \eqref{eq:tauerror} corresponds to a considerably smaller error for all (relative) PEM data we have simulated. As the starting value for the iteration \eqref{FPiter}, we choose the null matrix $\tau^0 = 0$ in all experiments. The numerical results are presented in {\sc Figures}~\ref{fig:1}--\ref{fig:4}.

\begin{figure}[t!]
\begin{center}
\includegraphics[width=0.85\textwidth]{./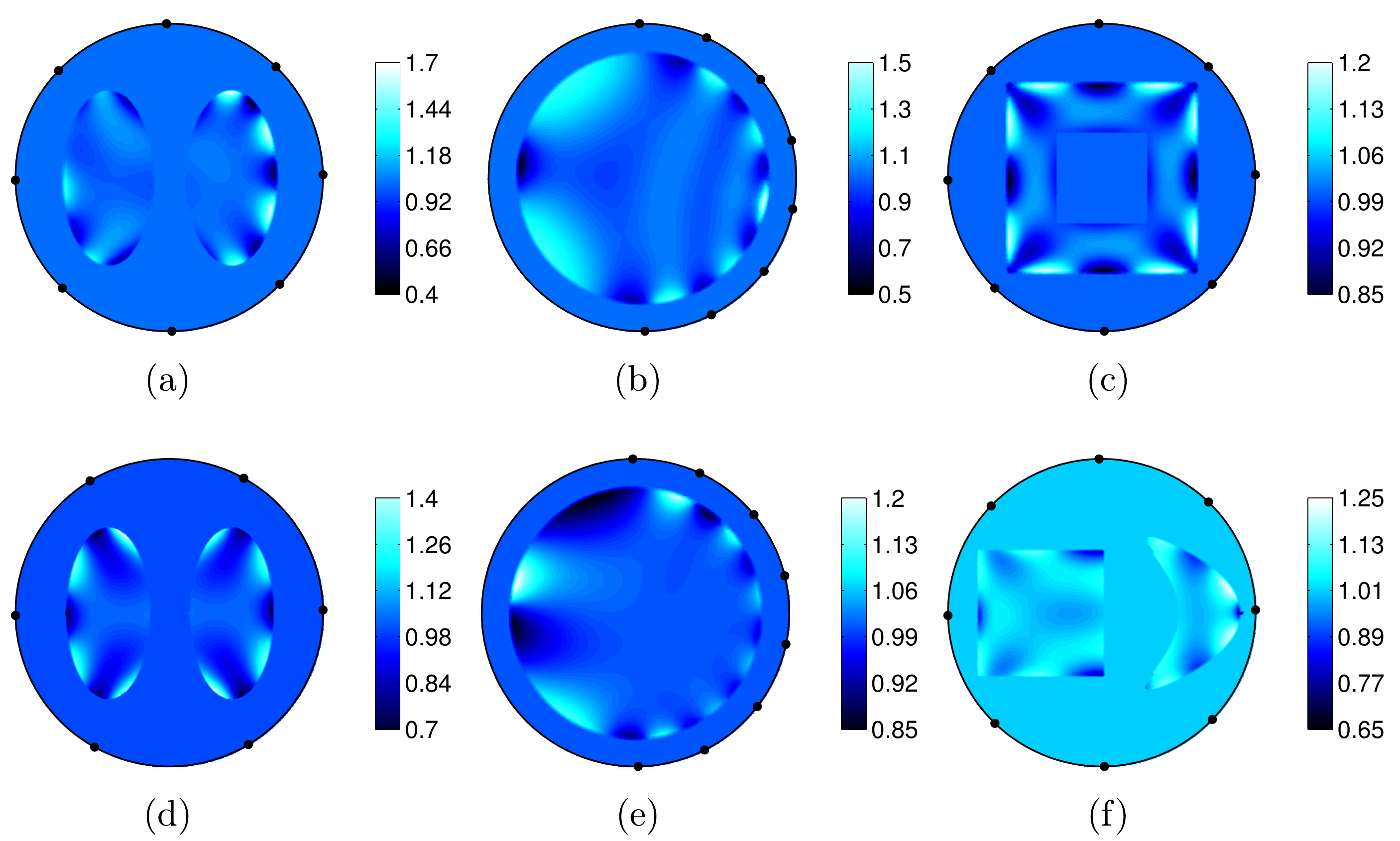}\\[8pt]
\includegraphics[width=0.4\textwidth]{./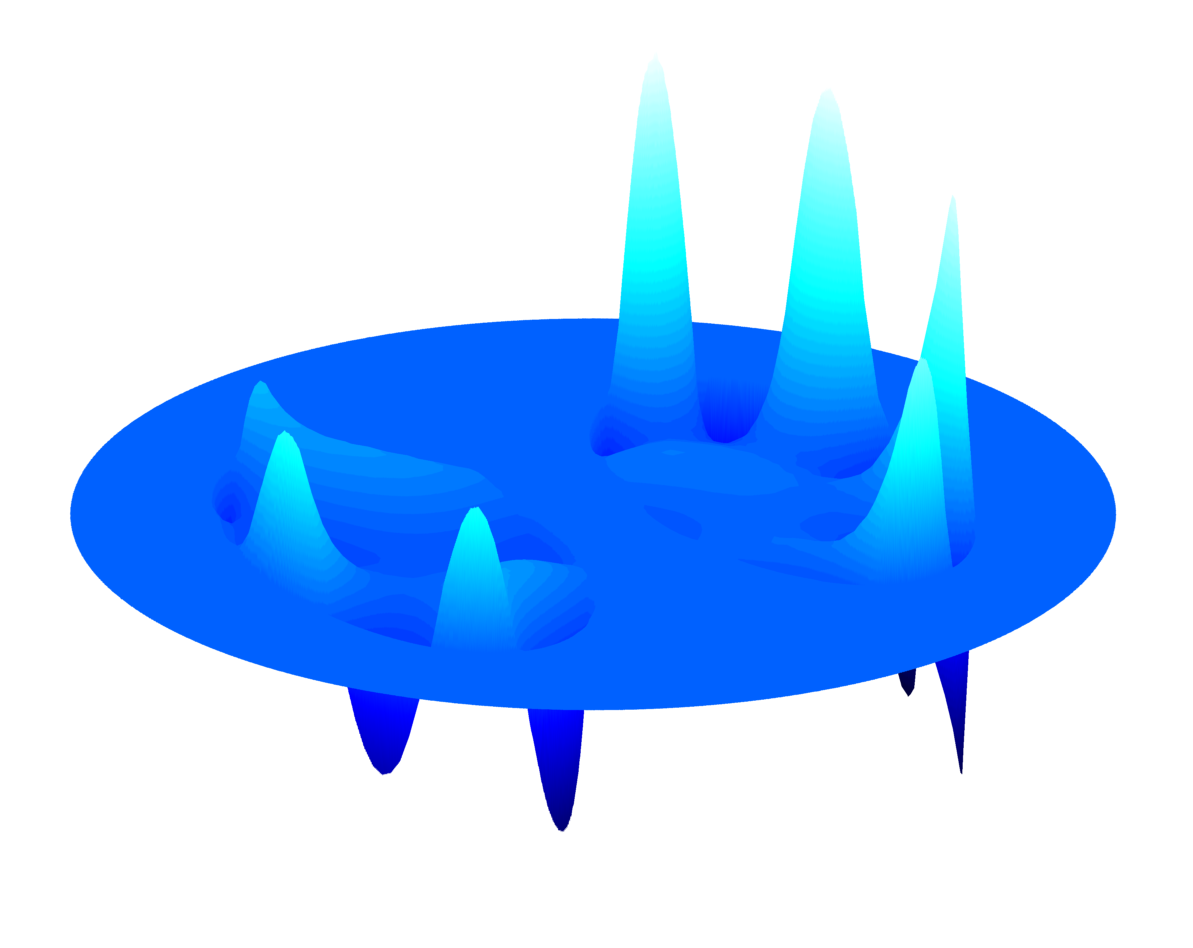}
\raisebox{2.5cm}{
\begin{tabular}{|c|c|c|c|}
\hline
\rowcolor{Gray}
 & $ \eps $ & $ \kappa_0^\#(x,y) $ & $ E_{\rm CEM}^\eps(\mathcal{I}) $ \\
\hline 
\cellcolor{Gray}(a) & $4.0$ & $ x+y+1 $ & $ 1.4\times 10^{-4} $ \\
\hline
\cellcolor{Gray}(b) & $ 2.0 $ & $\exp(-(x+0.5)^2 -y^2)$ & $ 1.1\times 10^{-3} $ \\ 
\hline
\cellcolor{Gray}(c) & $ 0.25 $ & $1$ & $ 2.3\times 10^{-4} $ \\ 
\hline
\cellcolor{Gray}(d) & $ 6.0 $ & $ 1 $ & $ 1.1 \times 10^{-4} $ \\ 
\hline
\cellcolor{Gray}(e) & $ 0.5 $ & $ -y $ & $ 8.4 \times 10^{-4} $ \\ 
\hline
\cellcolor{Gray}(f) & $ 2.0 $ & $ x $ & $ 8.8\times 10^{-4} $ \\
\hline
\end{tabular}
}
\vspace{-0.7cm}

\hspace{6.8cm}{\sc Table 1.} 

\vspace{0.6cm}

\caption{\label{fig:3} (a)--(f): Perturbed conductivities produced by Algorithm~\ref{alg:1} for the shown electrode configurations. Bottom left: A surface plot of the conductivity (a). {\sc Table}~1: The parameters $\eps$ and $\kappa_0^\#$ for (a)--(c) together with the respective relative CEM discrepancies $E^\eps_{\rm CEM}(\mathcal{I})$ defined by \eqref{eq:errormeas}. The CEM potentials are  simulated with the `trigonometric current basis', the universal electrode width $\pi/32$ and contact resistances of magnitude $0.01$~\cite{Cheney99,Somersalo92}.} 
\end{center}
\vspace{-2mm}
\end{figure}

{\sc Figure} \ref{fig:1} illustrates a first convergence test for Algorithm~\ref{alg:1}, investigating the effect of the choice for the free parameter $\eps$. There are four electrodes and a concentric disk of radius $1/2$ serves as the inclusion $\Omega$. The parameter $\kappa_0^\#$ is set to $\kappa_0^\# \equiv 1$. The convergence rate of the algorithm decreases as a function of $\eps$, but the fixed point scheme remains convergent for all $\eps$ in the interval $(0,6]$ (and beyond).

{\sc Figure}~\ref{fig:2} illustrates that increasing the number of electrodes has a significant impact on the output conductivity~$\sigma^\eps$ of Algorithm \ref{alg:1}. In particular, the spatial frequency of the angular oscillations in $\sigma^\eps$ seems to be directly linked to the number of electrodes. Moreover, at least with the chosen simple inclusion shape, the deviations of $\sigma^\eps$ from the unit background become less significant as the number of electrodes increases even if $\eps$ stays the same.

\begin{figure}[t!]
\begin{center}
\includegraphics[width=0.5\textwidth]{./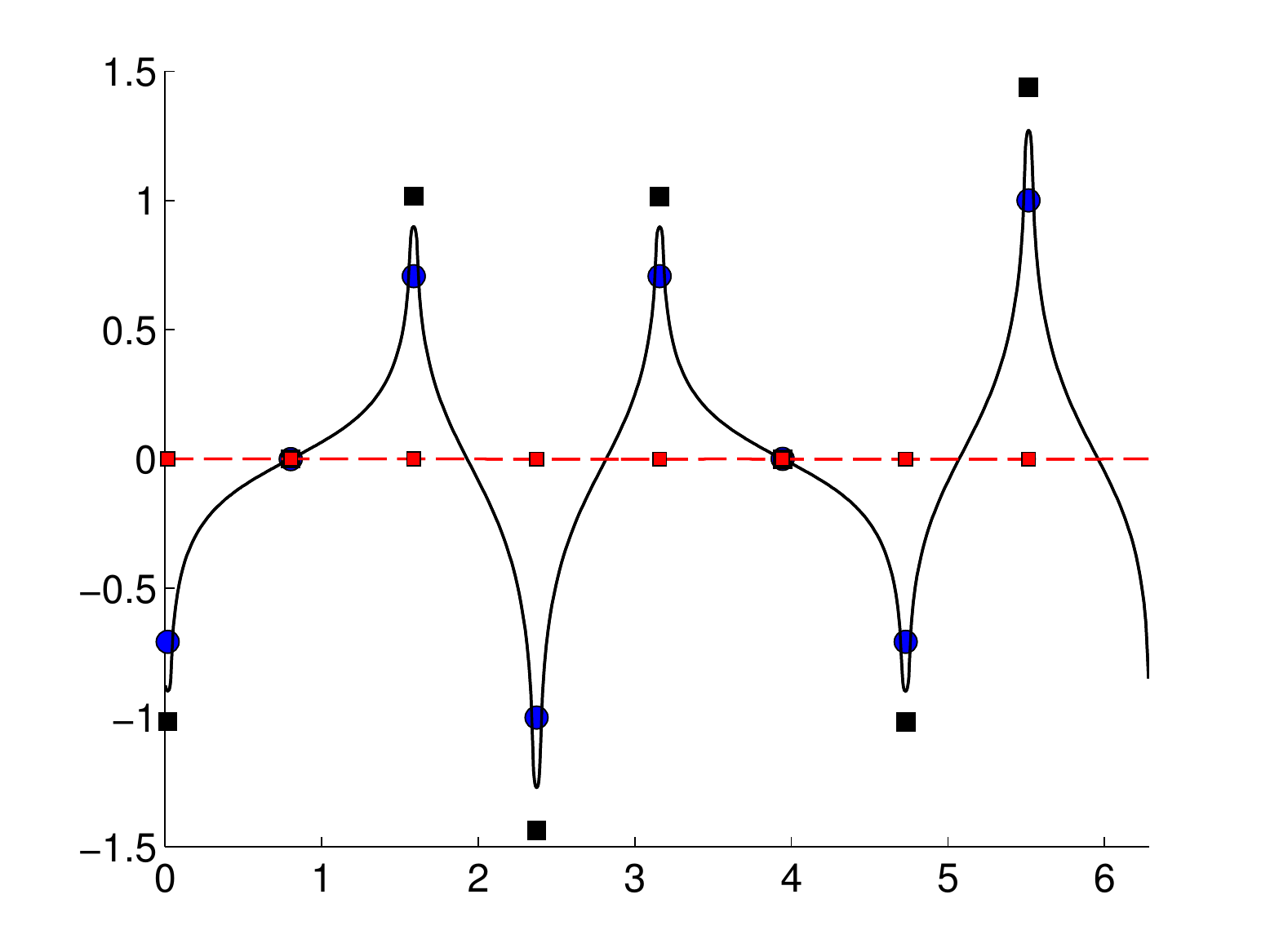}
\raisebox{8pt}{\includegraphics[width=0.48\textwidth]{./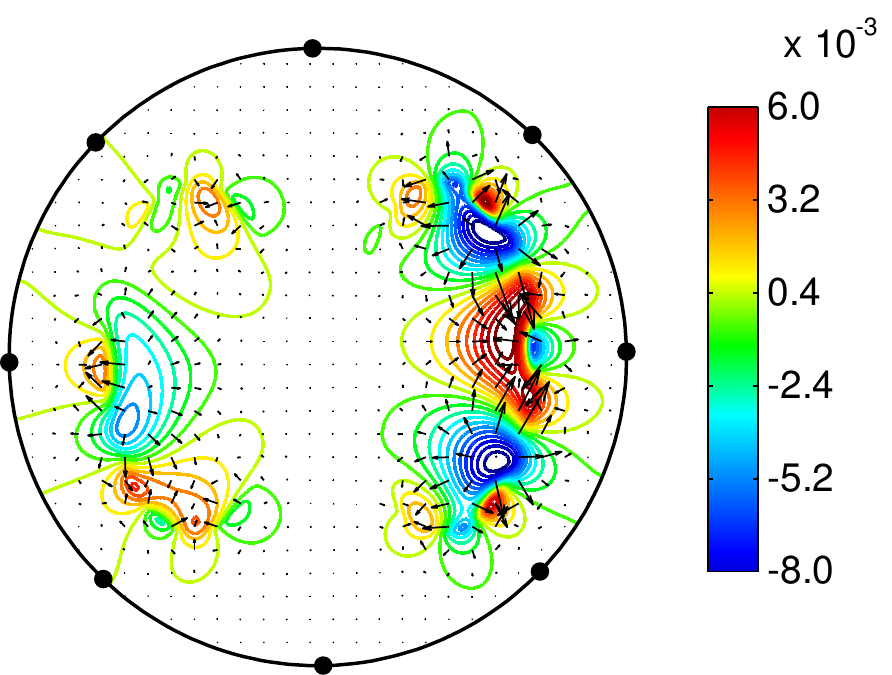}}
\caption{\label{fig:4} 
An example of CEM potentials corresponding to $\sigma^\eps$ of {\sc Figure}~\ref{fig:3}(a); see~\cite{Somersalo92}.
Left: The Dirichlet boundary value of the absolute CEM interior potential (black line), the absolute electrode voltages (black squares), the Dirichlet boundary value of the relative CEM interior potential (red dashed line) and the relative electrode potentials (red squares). The electrode-wise net input currents are also displayed (blue circles). The electrode width and contact resistances are as in {\sc Figure}~\ref{fig:3}. Right: The relative CEM interior potential. Equipotential lines and electric field arrows are displayed.} 
\end{center}
\vspace{-2mm}
\end{figure}

The inclusion shape and the choice of the initial perturbation $\kappa_0^\#$ have considerable effects on the output of the algorithm as shown in {\sc Figure}~\ref{fig:3}. A large $\Omega$ lying close to $\partial D$ typically yields significant deviations from the unit background in $\sigma^\eps$, even with a relatively high number of electrodes. Moreover, asymmetries in $\kappa_0^\#$ lead to an asymmetric $\sigma^\eps$ even in a geometrically symmetric setting. The indistinguishability of the output conductivities in {\sc Figure} \ref{fig:3} from the unit background is evaluated using FE approximations of the CEM \cite{Somersalo92,Vauhkonen97} with about $10\,000$ quadratic triangular elements, a fixed electrode width of $\pi/32$ and an underlying contact resistance parameter equal to $0.01$ on all electrodes. As a measure of the relative discrepancy for simulated (noiseless) CEM voltages, we employ 
\begin{equation}\label{eq:errormeas}
E_{\rm CEM}^\eps(\mathcal{I}) = \frac{\big|\mathcal{U}^\eps(\mathcal{I})-\mathcal{U}^0(\mathcal{I})\big|} {\big| \mathcal{U}^0(\mathcal{I}) \big|}. 
\end{equation}
Here, $\mathcal{U}^\eps(\mathcal{I}),\mathcal{U}^0(\mathcal{I}) \in \mathbb{R}^{N(N+1)}$ consist of the (stacked) CEM electrode potentials corresponding to  the `trigonometric current basis' $\mathcal{I} = \{I^{(j)}\}_{1\leq j \leq N} \subset \mathbb{R}_\diamond^{N+1}$~\cite{Cheney99} and $\sigma^\eps$, $\sigma^0$, respectively; cf.,~e.g.,~\cite{Hyvonen14}. In practice, the {\em relative} noise level in EIT measurements is {\em significantly} above $0.1\%$ (cf.~\cite{Cheney99}). Hence, {\sc Table}~1 indicates that all conductivities shown in {\sc Figure}~\ref{fig:3} are practically indistinguishable from the unit background in the framework of the CEM with smallish electrodes at the depicted locations~(cf.~\cite{Hanke11}).

Finally, {\sc Figure}~\ref{fig:4} illustrates a single simulated relative CEM potential for the conductivity $\sigma^\eps$ in {\sc Figure}~\ref{fig:3}(a) --- to be precise, both the interior electric potential and the electrode potentials are considered (cf.~\cite{Somersalo92}). The electrode widths and contact resistances are the same as in {\sc Figure}~\ref{fig:4}. It is noteworthy that both the Dirichlet boundary value of the relative interior electric potential and the relative electrode potentials do vanish according to a visual inspection, but the same does not hold for the relative electric potential in the interior of $D$. 
\begin{remark}
It is possible that the technique developed in this article could also be adapted to construct invisible conductivities directly for the CEM. However, instead of the functions $u^0_n$ defined by \eqref{pbRef0}, one would need to work with potentials corresponding to unit net currents between electrodes of finite size modeled by the CEM (cf.~\cite{Somersalo92}). This would make proving the convergence of the fixed point iteration as well as implementing the numerical algorithm more technical.
\end{remark}

\section{Concluding remarks}

We have introduced a fixed point scheme for computing invisible conductivity perturbations (of the two-dimensional unit conductivity) for a given electrode configuration in the framework of the PEM for EIT. Our numerical experiments demonstrate that the constructed conductivities are practically indistinguishable from the background also if the measurements are modeled by the more realistic CEM. In particular, take note that the conductivities shown in {\sc Figures}~\ref{fig:1}--\ref{fig:4} are as good solutions as the unit conductivity to the reconstruction problem of EIT with the considered sets of electrodes if no prior information on the behavior of the conductivity is available.

\bibliography{Bibli}
\bibliographystyle{plain}
\end{document}